  \def\swappedhead@plain#1#2#3{%
    \thmnumber{(\textup{#2})}
    \thmname{\@ifnotempty{#2}{~}\textup{#1}}
    \thmnote{ {\textup{(#3)}}}}
  \let\swappedhead\swappedhead@plain
\theoremstyle{plain}
\newtheorem{theo}[equation]{Theorem}
\newtheorem{prop}[equation]{Proposition}
\newtheorem{lemm}[equation]{Lemma}
\newtheorem{coro}[equation]{Corollary}
\newtheorem{coro1}[equation]{Corollary 1}
\newtheorem{coro2}[equation]{Corollary 2}
\newtheorem{coro3}[equation]{Corollary 3}
\theoremstyle{definition}
\newtheorem{defi}[equation]{Definition}
\newtheorem{exem}[equation]{Example}
\newtheorem{exemSSD}[equation]{Souriau's theorems}
\newtheorem{exemNS}[equation]{A primary orbit which doesn't split}
\newtheorem{exemNTS}[equation]{A primary orbit which splits nontrivially}
\newtheorem{exemTNS}[equation]{A primary space which doesn't split even topologically}
\newtheorem{rema}[equation]{Remark}
\newtheorem{remas}[equation]{Remarks}
\numberwithin{equation}{section}
\DeclareMathSymbol{A          }{\mathalpha}{operators}{`A}
\DeclareMathSymbol{B          }{\mathalpha}{operators}{`B}
\DeclareMathSymbol{C          }{\mathalpha}{operators}{`C}
\DeclareMathSymbol{D          }{\mathalpha}{operators}{`D}
\DeclareMathSymbol{E          }{\mathalpha}{operators}{`E}
\DeclareMathSymbol{F          }{\mathalpha}{operators}{`F}
\DeclareMathSymbol{G          }{\mathalpha}{operators}{`G}
\DeclareMathSymbol{H          }{\mathalpha}{operators}{`H}
\DeclareMathSymbol{I          }{\mathalpha}{operators}{`I}
\DeclareMathSymbol{J          }{\mathalpha}{operators}{`J}
\DeclareMathSymbol{K          }{\mathalpha}{operators}{`K}
\DeclareMathSymbol{L          }{\mathalpha}{operators}{`L}
\DeclareMathSymbol{M          }{\mathalpha}{operators}{`M}
\DeclareMathSymbol{N          }{\mathalpha}{operators}{`N}
\DeclareMathSymbol{O          }{\mathalpha}{operators}{`O}
\DeclareMathSymbol{P          }{\mathalpha}{operators}{`P}
\DeclareMathSymbol{Q          }{\mathalpha}{operators}{`Q}
\DeclareMathSymbol{R          }{\mathalpha}{operators}{`R}
\DeclareMathSymbol{S          }{\mathalpha}{operators}{`S}
\DeclareMathSymbol{T          }{\mathalpha}{operators}{`T}
\DeclareMathSymbol{U          }{\mathalpha}{operators}{`U}
\DeclareMathSymbol{V          }{\mathalpha}{operators}{`V}
\DeclareMathSymbol{W          }{\mathalpha}{operators}{`W}
\DeclareMathSymbol{X          }{\mathalpha}{operators}{`X}
\DeclareMathSymbol{Y          }{\mathalpha}{operators}{`Y}
\DeclareMathSymbol{Z          }{\mathalpha}{operators}{`Z}
\let\polishl\l
  \newcommand\CC{{\mathbf{C}}}         
  \newcommand\EFG{{\Gamma}}            
  \newcommand\G{{G}}                   
  \newcommand\Gcov{{\smash{\cov\G}}}   
  \newcommand\GU{{\G^1}}                
  \newcommand\K{K}                     
  \newcommand\Ko{{\K\o}}               
\renewcommand\L{L}                     
  \newcommand\Lcov{{\smash{\cov\L}}}   
  \newcommand\N{{N}}                   
  \newcommand\Ncov{{\smash{\cov\N}}}   
  \newcommand\RR{{\mathbf{R}}}         
  \newcommand\TT{{\mathbf{T}}}         
  \newcommand\ZZ{{\mathbf{Z}}}         
  \newcommand\efg{{\gamma}}            
  \newcommand\g{{g}}                   
  \newcommand\gcov{{\cov\g}}           
  \newcommand\h{{l}}                   
\renewcommand\k{{k}}                   
\renewcommand\l{{l}}                   
  \newcommand\n{{n}}                   
\renewcommand\o{^{\smash{\mathrm o}}}  
  \newcommand\Lie{\mathfrak}
  \newcommand\LG{{\Lie{g}}}            
  \newcommand\LJ{{\Lie{j}}}            
  \newcommand\LK{{\Lie{k}}}            
  \newcommand\LL{{\Lie{l}}}            
  \newcommand\LN{{\Lie{n}}}            
  \newcommand\lie{\greektt}
\renewcommand\lg{{\lie{\gamma}}}       
\renewcommand\ll{{\lie{\lambda}}}      
\renewcommand\ln{{\lie{\nu}}}          
  \newcommand\T{{T}}                   
  \newcommand\U{{U}}                   
  \newcommand\Ucov{{\smash{\cov\U}}}   
  \newcommand\X{{X}}                   
  \newcommand\XU{{\X^1}}                
  \newcommand\Y{{Y}}                   
  \newcommand\Z{{V}}                   
  \newcommand\Zdot{{\smash{\dot\Z}}}   
\renewcommand\a{{c}}                   
  \newcommand\acheck{{\check\a}}       
\renewcommand\r{{r}}                   
  \newcommand\s{{s}}                   
\renewcommand\u{{u}}                   
  \newcommand\ucov{{\cov\u}}           
  \newcommand\x{{x}}                   
  \newcommand\y{{y}}                   
  \newcommand\z{{v}}                   
  \newcommand\w{\omega}                
  \newcommand\wU{\w_\U}                
  \newcommand\wUcov{\w_\Ucov}          
  \newcommand\wY{{\sigma}}             
  \newcommand\wZ{\w_\Z}                
  \newcommand\AD[1]{{\underline{#1}}}  
  \DeclareMathOperator\Ad{Ad}          
  \newcommand\antidiag[1]{{\Delta(#1)}}
  \newcommand\e[1]{{\mathrm e^{#1}}}   
  \newcommand\inv{^{-1}}                
  \newcommand\PHI{{\Phi}}               
  \newcommand\PI{{\Pi}}                 
  \newcommand\PIY{{\Psi}}               
  \newcommand\PSI{{\Psi}}               
  \newcommand\pGLK{{p}}                 
  \newcommand\proj{{\pi}}               
  \newcommand\pU{{\rho}}                
  \newcommand\pX{{\alpha}}              
  \newcommand\pY{{\beta}}               
  \newcommand\phiU{\phi}                
  \newcommand\phiZ{\psi}                
  \let\Re\relax
  \DeclareMathOperator\Re{Re}           
  \newcommand{\<}{\langle}
\renewcommand{\>}{\rangle}
  \DeclareFontFamily{OMX}{MnSymbolE}{}  
  \DeclareFontShape{OMX}{MnSymbolE}{m}{n}{
      <-6>  [0.85] MnSymbolE5
     <6-7>  [0.85] MnSymbolE6
     <7-8>  [0.85] MnSymbolE7
     <8-9>  [0.85] MnSymbolE-Bold8
     <9-10> [0.85] MnSymbolE-Bold9
    <10-12> [0.85] MnSymbolE-Bold10
    <12->   [0.85] MnSymbolE12}{}
  \DeclareMathOperator\ann{ann}
  \newcommand\bracl{{\text{\raisebox{.15ex}{\usefont{OMX}{MnSymbolE}{m}{n}B}}}}
  \newcommand\bracr{{\text{\raisebox{.15ex}{\usefont{OMX}{MnSymbolE}{m}{n}G}}}}
  \newcommand\cov{\tilde}
\renewcommand\d{{\delta}}
  \newcommand\I{{\iota}}
  \newcommand\Ind{{\operatorname{Ind}}}
  \newcommand\INT{\AD}
\renewcommand\j{{\hspace{.06em}j\hspace{-.06em}}}                   
  \newcommand\Irr[1]{{\mathrm{Irr}(#1)}}
  \newcommand\moment{moment }
  \newcommand\sub[1]{_{\smash{\raisebox{1pt}{$\scriptstyle #1$}}}}
\providecommand\greektt{}
\title[Primary Spaces]{Primary Spaces, \,Mackey's Obstruction, \,and the Generalized Barycentric Decomposition}
\author{Patrick Iglesias-Zemmour}
\email{piz@math.huji.ac.il}
\address{Laboratoire d'Analyse, Topologie et Probabilités/CNRS, Aix-Marseille Université, F\hspace{1pt}-13453 Marseille, France}
\author{François Ziegler}
\email{fziegler@georgiasouthern.edu}
\address{Department of Mathematical Sciences, Georgia Southern University, Statesboro, GA 30460-8093, USA}
\date{March 25, 2012}
\thanks{Received 03/26/2012, accepted 10/30/2013}
\begin{document}

\begin{abstract}
   We call a hamiltonian N-space \emph{primary} if its \moment map is onto a single coadjoint orbit. The question has long been open whether such spaces always split as (homogeneous)\,$\times$\,(trivial), as an analogy with representation theory might suggest. For instance, Souriau's \emph{barycentric decomposition theorem} asserts just this when N is a Heisenberg group. For general N, we give explicit examples which do not split, and show instead that primary spaces are always flat bundles over the coadjoint orbit. This provides the missing piece for a full ``Mackey theory'' of hamiltonian G\nobreakdash-spaces, where G is an overgroup in which N is normal.
\end{abstract}

\maketitle


\setlength\epigraphwidth{24.4em} 
\renewcommand{\epigraphsize}{\footnotesize}
\setlength\epigraphrule{0pt}
\epigraph{Il est bien entendu que chaque moment, confondu qu'il est en tous les autres, demeure pourtant en lui-même différencié.}{---André Breton, \emph{Les vases communicants}}

\section*{Introduction}

Let $\G$ be a finite group and let $\N$ be a normal subgroup, so that we have an exact sequence
\begin{equation*}
  1\longrightarrow
  \N\longrightarrow
  \G\longrightarrow
  \G/\N\longrightarrow
  1.
\end{equation*}
Questions about $\G$ then often reduce to similar ones about $\N$ and $\G/\N$. A classic example is the calculation of the dual $\Irr\G=\{$irreducible unitary $\G$\nobreakdash-modules$\}/$isomorphism. This, as Clifford \cite{Clifford:1937a} showed, boils down in 3 steps to finding $\Irr\N$ and the projective duals of subgroups of $\G/\N$:
\begin{enumerate}[(1)]
  \setcounter{equation}{3}
  \item \label{step1}
  When restricted to $\N$, every  $\X\in\Irr\G$ decomposes into the irreducibles belonging to a single $\G$-orbit $\G(\U)\subset\Irr\N$. Here the action of $\G$ on $\Irr\N$ is contragredient to its conjugation action on $\N$.
   
  \item \label{step2}
  Write $\G(\U)=\G/\GU$. Then we have $\X=\Ind_{\GU}^\G\,\XU$ for some~\mbox{$\XU\in\Irr{\GU}$} whose restriction to $\N$ is a multiple of $\U$. Leaving out the superscripts, we are reduced to the \emph{primary case} where $\U$ is $\G$-invariant and $\X$ itself restricts to a multiple of $\U$. 

  \item \label{step3}
  In that case we have a factorization $\X=\U\otimes\T$, where the action of $\G$ on each factor is only projective. On $\U$, it uniquely extends the linear action of $\N$. On $\T$, it comes from a projective action of $\G/\N$. Both give rise to central extensions of $\G/\N$ by the circle, and these add up to zero in $\mathrm{H}^2(\G/\N,S^1)$.

\end{enumerate}
In \cite{Mackey:1958} Mackey famously extended this analysis to locally compact groups, and the central extension attached to $\smash{\U\in\Irr\N^\G}$ has since been known as the \emph{Mackey obstruction of} $\U$. (For an exposition see \cite[pp.\,1263, 1285]{Fell:1988b}.) 

Now assume that $\G$, $\N$, etc.~are Lie groups with Lie algebras $\LG$, $\LN$, etc. In an agenda pioneered by Kirillov, Kostant, and Souriau, one expects propositions about (not only irreducible) unitary $\G$\nobreakdash-modules to be reflected by propositions about hamiltonian $\G$\nobreakdash-spaces (i.e., symplectic manifolds on which our groups act with equivariant \moment maps). Thus for instance, one expects that
\begin{itemize}[--]
   \item the role of decomposition into irreducibles is played by the decomposition of the image of the \moment map into coadjoint orbits \cite{Kirillov:1962}; 
   \item the role of multiplicities (such as $\T$ above) is played by Marsden-Weinstein reduced spaces \cite{Marsden:1974,Guillemin:1982};
   \item the role of tensor product is played by the cartesian product of hamiltonian $\G$-spaces \cite[p.\,32]{Weinstein:1977a};
   \item the role of unitary induction is played by the symplectic induction construction of Kazhdan-Kostant-Sternberg \cite[p.\,498]{Kazhdan:1978};
   \item the role of Mackey's imprimitivity theorem is played by the symplectic imprimitivity theorem of \cites[Thm 2.9]{Ziegler:1996a}[Thm 4]{Landsman:2006}; etc.
\end{itemize}
Using these ingredients, a complete geometric parallel of steps (\ref{step1}) and (\ref{step2}) was established  in \cite[Prop 3.8]{Ziegler:1996a}. This reduces the study of a hamiltonian $\G$-space $\X$, when $\G$ has a closed normal subgroup $\N$, to the \emph{primary case} where the \moment map for $\N$ is onto a single coadjoint orbit $\U$. (We can think of this case as lying at the opposite end of the spectrum from the \emph{multiplicity-free case} of \cite{Guillemin:1984a}, where all reduced spaces are points \cite[{}45.8]{Guillemin:1984}.)

The purpose of this paper is to unravel the structure of primary spaces and thereby to complete step (\ref{step3}), which has proved more elusive. Here the representation theoretic analogy might lead one to expect a splitting $\X=\bar\U\times\T$ where $\bar\U$ is homogeneous (a covering of $\U$) and $\T$ a trivial  $\N$\nobreakdash-space. The earliest results of this kind are those of Souriau \cite{Souriau:1970}, which can be viewed as asserting just such a splitting when $\N$ is a Heisenberg group; see §4.1 below. The case $\N=\mathrm{SO}(3)$ was dealt with in \cites{Iglesias:1984}[Thm 4.3]{Iglesias:1991}, and further generalizations occur in \cites{Lisiecki:1992}{Li:1994}[§4]{Ziegler:1996a}, but always under hypotheses that guarantee a splitting of the above form. To our knowledge, the literature contains neither a theory nor in fact a single example of the non-split situation, which we will see does actually occur.

The paper is divided into four sections. In section 1 we disregard the role of any ambient group $\G$ (or, what amounts to the same, assume that $\G=\N$) and show that while perhaps not split, primary $\N$\nobreakdash-spaces over a given coadjoint orbit $\U$ are always \emph{flat bundles} $\Ucov\times_\EFG\Z$ associated to a certain covering of $\U$. (As Alan Weinstein kindly points out, this is a general property of Poisson maps to symplectic manifolds: \cite[§7.6]{Cannas-da-Silva:1999}.) Section 2 shows how the extra structure arising from the hamiltonian action of a group $\G$ normalizing $\N$ is encoded in the fiber $\Z$. Section 3 then spells out three corollaries which generalize Souriau's theorems to an arbitrary Lie group $\N$, and comprise our symplectic ``translation'' of the Mackey obstruction step (\ref{step3}). Finally Section 4 shows that our theory is not empty by exhibiting examples which indeed do not split.

We conclude this introduction with two remarks of principle. First, while our notion of primary space arises naturally from the geometrical problem we set out to solve, one should not overestimate its parallelism with primary representations. Indeed several representations (parametrized by the characters of $\EFG$) may correspond to an orbit $\U$, and our non-split examples seem to correspond to the situation where one representation of $\G$ decomposes into many of them. Secondly, we have scrupulously avoided any connectedness assumptions, as these would spoil the recursive applicability of our symplectic ``Mackey machine''; this explains why all coverings are constructed by hand, since one knows that the universal covering of a disconnected group or homogeneous space is not in general a group or homogeneous space \cite{Taylor:1954}.

\section{Decomposition of a primary $\N$-space}
\label{section1}

Let $\N$ be a Lie group and let $(\X,\w,\PI)$ be a hamiltonian $\N$-space, that is, a symplectic manifold $\X$ with an action of $\N$ which preserves its 2-form $\w$, and an equivariant \moment map $\PI:\X\to\LN^*$. If $\w$ or $\PI$ are understood, we may drop them from the notation or use subscripts such as $\w_\X$, $\PI_\X$, etc.

\begin{defi}
   We say that $\X$ is \emph{\textbf{primary}} if the image of the \moment map is a single coadjoint orbit $\U$ of $\N$. When $\N$ (or $\U$) needs emphasis we say \emph{$\N$\textbf{\nobreakdash-primary}} (\emph{\textbf{over} $\U$}). 
\end{defi}

Our purpose is to describe and classify primary $\N$-spaces over a given orbit $\U$. We do not distinguish between two spaces $\X_1$, $\X_2$ which are \emph{isomorphic}, i.e., related by an $\N$-equivariant diffeomorphism which transforms $\w_1$ into $\w_2$ and $\PI_1$ into $\PI_2$. 

\begin{exem}[Homogeneous spaces; split spaces]
   Any \emph{homogeneous} hamiltonian $\N$-space is primary: indeed one knows that its \moment map is a covering map, $\bar\U\to\U$, onto a coadjoint orbit \cite{Kostant:1970,Souriau:1970}. More generally it is clear that the product
   \begin{equation}
      \label{split}
      \bar\U\times\T
   \end{equation}
   of such a covering by any trivial hamiltonian $\N$-space, is primary over $\U$. Here by a \emph{trivial} hamiltonian $\N$-space $\T$ we mean one with trivial $\N$-action and zero \moment map, $\T\to\{0\}$; and we recall that the \emph{product} $\X\times\Y$ of two hamiltonian $\N$-spaces is the product manifold with 2-form $\w_\X+\w_\Y$, action $\n(\x,\y)=(\n(\x),n(\y))$, and \moment map $\PI_\X+\PI_\Y$.
\end{exem}

\begin{defi}
   \label{splitDefi}
   We say that a primary space over $\U$ \emph{\textbf{splits}} if it has the form \eqref{split} ($\bar\U$ homogeneous, $\T$ trivial). We further characterize the splitting as \emph{\textbf{trivial}} or not according as the covering $\bar\U\to\U$ is trivial (bijective)~or~not. 
\end{defi}

\begin{exem}[Flat bundles]
   \label{flatBundles}
   Split spaces, it will turn out, do not exhaust primary spaces. Instead we have to resort to the next simplest construction, which we now present. First observe that the class of homogeneous $\N$-spaces over $\U$ contains a maximal object, $\Ucov$, which we may describe as follows. Write $\K$ for the stabilizer $\N_\a$ of a point $\a\in\U$ which we fix once and for all, $\Ko$ for its identity component, and 
\begin{equation}
   \label{Ucov}
   \Ucov\overset{\pU}{\longrightarrow}\U
\end{equation}
for the covering $\N/\Ko\longrightarrow\N/\K$ with principal group
\begin{equation}
   \label{EFG}
   \EFG = \K/\Ko
\end{equation}
(a.k.a.~the ``$\N$-equivariant fundamental group'' of $\U$ \cite{Collingwood:1993}). Endowing $\Ucov$ with the pull-back $\pU^*\wU$ of the Kirillov-Kostant-Souriau 2-form of $\U$, we obtain a primary $\N$-space with \moment map $\pU$.

Now suppose that $\Z$ is any primary $\EFG$-space, or in other words, any symplectic manifold with an $\wZ$-preserving action of $\EFG$. (This is hamiltonian with \moment map $\Z\to\{0\}$.) We may then form the associated ``flat bundle''
   \begin{equation}
      \label{prodFib}
      \Ucov\times_\EFG\Z\xrightarrow{\ \ \PI\ \ }\U
   \end{equation}
   and observe that its total space (the set of orbits $[\ucov,\z]$ of the product action of $\EFG$ on $\Ucov\times\Z$) is naturally a primary $\N$-space over $\U$: the 2-form is the one obtained from $\pU^*\wU+\wZ$ by passage to the quotient, $\N$ acts by $\n([\ucov,\z])=[\n(\ucov),\z]$, and the \moment map is $\PI([\ucov,\z])=\pU(\ucov)$.
   \end{exem}

Conversely:

\begin{theo}[Generalized Barycentric Decomposition]
   \label{decTheo} 
   \textup{(i)} Every primary $\N$-space $(\X,\w,\PI)$ over $\U$ is obtained in this way. More precisely~one always has
	   \begin{equation}
	     \label{decFibX}
	     \X=\Ucov\times_\EFG\Z
	     \qquad\ \text{where}\ \qquad
	     \Z=\PI\inv(\a).
	   \end{equation}
	   \textup{(ii)} Two primary $\N$-spaces $(\X_i,\w_i,\PI_i)_{i=1,2}$ over $\U$ are isomorphic iff the fibers $\Z_i=\PI\sub{i}\inv(\a)$ are isomorphic as primary $\EFG$-spaces.    
\end{theo}

\begin{figure}
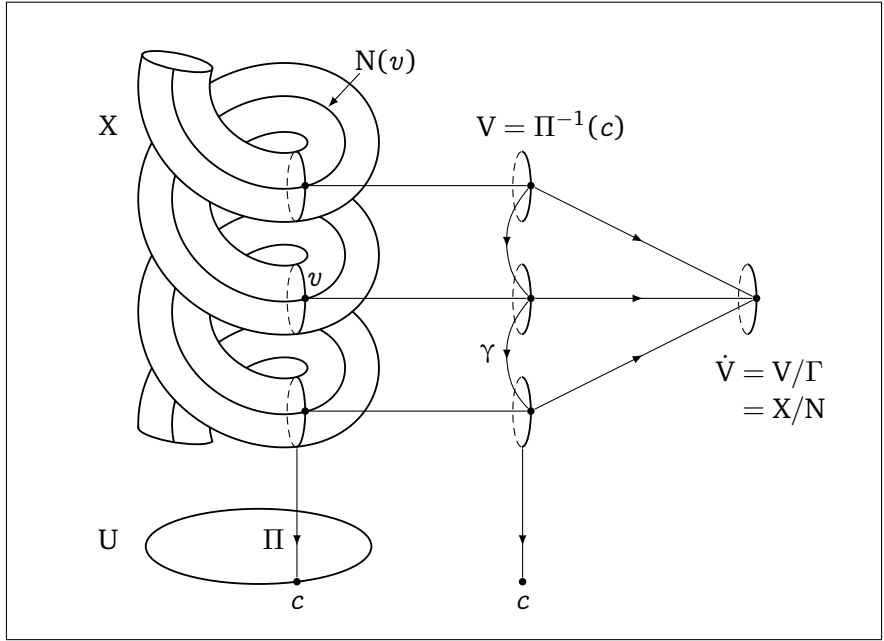

  \centering
  \framebox[1.12\width]{%
  \beginpgfgraphicnamed{fig1}
     \input figure1.tex%
  \endpgfgraphicnamed%
  }
  \caption{Barycentric decomposition of the primary space $\X$.}
  \label{fig:decomposition}
\end{figure}

Implied here is the assertion that $\PI\inv(\a)$ is always naturally a primary $\EFG$\nobreakdash-spa\-ce. This is ensured by the following lemma and corollary, which give the decomposition at the infinitesimal level. 

\begin{lemm}[\textsc{Fig.}\,\ref{fig:decomposition}]
   \label{decLemm}
   Let $(\X,\w,\PI)$ be a primary $\N$-space over $\U=\N(\a)$. Then the fiber $\Z=\PI\inv(c)$ is a symplectic submanifold of $\X$ and we have\textup, for each $\z\in\Z$\textup, the direct sum decomposition into symplectically orthogonal subspaces
   \begin{equation}
      \label{decTX}
      T_\z \X=T_\z\N(\z)\oplus T_\z\Z.
   \end{equation}
\end{lemm}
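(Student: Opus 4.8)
The plan is to prove the decomposition \eqref{decTX} by identifying each summand with a kernel/image of the moment map's derivative and then using the equivariance of $\PI$ together with the coadjoint-orbit structure of $\U$ to see that these subspaces are mutually symplectically orthogonal and exhaust $T_\z\X$. First I would recall the basic moment-map identity: for $\xi\in\LN$, writing $\xi_\X$ for the fundamental vector field, one has $\iota_{\xi_\X}\w=d\langle\PI,\xi\rangle$, so that for $v\in T_\z\X$ one gets $\w(\xi_\X(\z),v)=\langle T_\z\PI(v),\xi\rangle$. Two consequences drop out at once: the $\w$-orthogonal of the tangent space to the $\N$-orbit, $T_\z\N(\z)^{\w}$, is exactly $\ker T_\z\PI$; and since $\PI(\X)=\U$ and $\PI$ is equivariant, the image of $T_\z\PI$ is $T_\a\U=T_\a\N(\a)$, which has dimension $\dim\N-\dim\K$. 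Hence $T_\z\Z=\ker T_\z\PI$ has codimension $\dim\N-\dim\K$ in $T_\z\X$, while $T_\z\N(\z)$ has dimension $\dim\N-\dim\N_\z$; and because $\PI(\z)=\a$ forces $\N_\z\subset\K$ (equivariance again), actually $\dim\N(\z)\ge\dim\N-\dim\K$.

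Next I would compute the intersection $T_\z\N(\z)\cap T_\z\Z$. An element $\xi_\X(\z)$ lies in $T_\z\Z=\ker T_\z\PI$ iff $T_\z\PI(\xi_\X(\z))=0$; but by equivariance $T_\z\PI(\xi_\X(\z))=\xi_{\LN^*}(\a)=-\ad^*_\xi\a$, which vanishes iff $\xi\in\LK=\Lie(\N_\a)$. So $T_\z\N(\z)\cap T_\z\Z=\{\xi_\X(\z):\xi\in\LK\}$. The claim that the sum \eqref{decTX} is direct therefore amounts to showing this intersection is zero, i.e.\ that $\xi\in\LK$ with $\xi_\X(\z)=0$ already at the level that $\xi_\X(\z)=0$; equivalently, that $\LK\cap\Lie(\N_\z)$ injects to zero, i.e.\ $\dim\N(\z)=\dim\N-\dim\K$ exactly, i.e.\ the stabilizer $\N_\z$ has the same Lie algebra as $\K=\N_\a$, i.e.\ $\LN_\z=\LK$. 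This is the crux. I expect to prove it from the nondegeneracy of $\w$ on $\Z$ combined with dimension bookkeeping: we have $T_\z\N(\z)^{\w}=\ker T_\z\PI=T_\z\Z$, so $T_\z\Z^{\w}=T_\z\N(\z)$ (double orthogonal, $\w$ nondegenerate on $\X$); consequently $T_\z\Z\cap T_\z\Z^{\w}=T_\z\Z\cap T_\z\N(\z)$, which is the radical of $\w|_{T_\z\Z}$. So $\Z$ is symplectic at $\z$ precisely when this intersection is zero, and then the sum is direct and symplectically orthogonal by construction. Thus the whole lemma reduces to showing $\w|_{T_\z\Z}$ is nondegenerate; dimensionally, $\dim T_\z\Z+\dim T_\z\N(\z)\le\dim\X$ with equality iff the sum is direct, and one sees $\dim T_\z\Z=\dim\X-\dim T_\a\U$ while $\dim T_\z\N(\z)\le\dim T_\a\U$, so in fact $\dim T_\z\Z+\dim T_\z\N(\z)\le\dim\X$ always, with equality iff $\dim\N(\z)=\dim\U$.

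So the heart of the matter, and the step I expect to be the main obstacle, is showing $\LN_\z=\LK$ (equivalently that the radical of $\w|_{T_\z\Z}$ is trivial). The inclusion $\LN_\z\subset\LK$ is free from equivariance; for the reverse, I would argue that if $\xi\in\LK$ then $\xi_\X(\z)\in T_\z\Z$ lies in the radical of $\w|_{T_\z\Z}$ — indeed for $v\in T_\z\Z=\ker T_\z\PI$ we have $\w(\xi_\X(\z),v)=\langle T_\z\PI(v),\xi\rangle=0$ — so nondegeneracy of $\w$ on $\Z$, once established, forces $\xi_\X(\z)=0$, hence $\xi\in\LN_\z$. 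But this makes the argument circular unless nondegeneracy is obtained independently. The clean way out is to avoid assuming $\Z$ symplectic and instead \emph{deduce} it: we have shown that the radical of $\w|_{T_\z\Z}$ equals $T_\z\Z\cap T_\z\N(\z)=\{\xi_\X(\z):\xi\in\LK\}$, and separately that $T_\z\X=T_\z\Z+T_\z\N(\z)$ would follow if this radical vanished, while without that we only know $T_\z\Z+T_\z\N(\z)$ has codimension equal to $\dim(T_\z\Z\cap T_\z\N(\z))$. The missing input must come from the primary hypothesis in a more substantial way — presumably that $\PI$ is a submersion onto $\U$, or that $\K$ acts on $\Z$ in a way making $\w|_\Z$ symplectic via a Witt/Moore-type normal-form argument. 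I would therefore structure the final write-up as: (1) moment identity and the equality $T_\z\N(\z)^{\w}=\ker T_\z\PI=T_\z\Z$; (2) $T_\z\Z^{\w}=T_\z\N(\z)$ by biorthogonality; (3) identify the common part with $\{\xi_\X(\z):\xi\in\LK\}$; (4) invoke that $\PI$ restricted near $\z$ is a submersion onto the homogeneous space $\U$, so that $\dim\ker T_\z\PI=\dim\X-\dim\U$ and the $\N$-orbit through $\z$ already covers a $\dim\U$-dimensional complement, forcing the intersection to be $\{0\}$; (5) conclude directness, symplectic orthogonality, and hence nondegeneracy of $\w|_\Z$, i.e.\ $\Z$ is a symplectic submanifold.
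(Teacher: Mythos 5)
Your plan follows the same route as the paper's proof: identify $T_\z\Z=\ker(D\PI(\z))=T_\z\N(\z)^\w$, use the fact that $\PI$ is a submersion onto $\U$ together with equivariance to see that $T_\z\N(\z)$ and $T_\z\Z$ span $T_\z\X$, and conclude. But the decisive step --- that the intersection $T_\z\N(\z)\cap T_\z\Z$ vanishes --- is not actually closed by what you write. Your step (4) tries to extract it from a dimension count, and that count is circular: from spanning one only gets $\dim(T_\z\N(\z)\cap T_\z\Z)=\dim T_\z\N(\z)-\dim\U$, and the vanishing of this is \emph{equivalent} to the equality $\dim T_\z\N(\z)=\dim\U$, i.e.\ to the statement about the stabilizer algebras that you correctly single out as ``the crux.'' (Note also that your asserted inequality $\dim T_\z\N(\z)\le\dim T_\a\U$ goes the wrong way a priori: equivariance gives $\N_\z\subset\K$, hence $\dim T_\z\N(\z)\ge\dim\U$, with equality being part of what is to be proved; in the paper it appears only as a \emph{corollary} of the lemma.)

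The missing one-line argument is already contained in your steps (1)--(2): once you know $T_\z\X=T_\z\N(\z)+T_\z\N(\z)^\w$, take $\w$-orthogonals of both sides. Nondegeneracy of $\w$ on $\X$ makes the left side $\{0\}$, while biorthogonality gives $\bigl(T_\z\N(\z)+T_\z\N(\z)^\w\bigr)^\w=T_\z\N(\z)^\w\cap T_\z\N(\z)$. Hence the sum is direct, the two summands are symplectic, and $\Z$ is a symplectic submanifold --- no Witt/normal-form argument and no separate proof that the Lie algebra of $\N_\z$ equals $\LK$ are needed. One further small point you gloss over: to know that the image of $D\PI(\z)$ lies in $T_\a\U$ and that $\Z$ is a submanifold, one should regard $\PI$ as a smooth map into $\U=\N/\K$ with its quotient manifold structure, since $\U\hookrightarrow\LN^*$ need not be an embedding; the paper cites \cite[2.3.12(i)]{Ortega:2004a} for exactly this.
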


\begin{proof}
   We know that $\PI:\X\to\LN^*$ remains smooth when regarded as a map $\X\to\U$ where $\U=\N/\K$ has the quotient manifold structure \cite[2.3.12(i)]{Ortega:2004a}. In particular it is (by equivariance) a submersion: hence, its fiber $\Z$ is a submanifold. Moreover, by a well-known property of \moment maps \cites[{}12.152]{Souriau:1970}[Lemma (ii)]{Marsden:1974}, the tangent space to $\Z$ at $\z$ is 
   \begin{equation}
      \label{lemmMW}
      T_\z\Z=\ker(D\PI(\z))=\LN(\z)^\w,
   \end{equation}
   the orthogonal of the tangent space $T_\z\N(\z)=\LN(\z)$ relative to the 2-form $\w$. But $\LN(\z)$ and $\LN(\z)^\w$ span $T_\z\X$: indeed, given $\d\z\in T_\z\X$ we can find a $\ln\in\LN$ such that $D\PI(\z)(\d\z)=\ln(\a)$, and then $\d\z$ is the sum of
   \begin{equation}
      \ln(\z)\in\LN(\z)
      \qquad
      \text{and}
      \qquad
      \d\z - \ln(\z)\in\ker(D\PI(\z)),
   \end{equation}
   where the last inclusion is by equivariance of $\PI$. Now, taking orthogonals on both sides of the relation $T_\z\X=\LN(\z)+\LN(\z)^\w$, we get $\{0\}=\LN(\z)\cap\LN(\z)^\w$. So the sum is direct, and each summand is a symplectic subspace of $T_\z\X$.
\end{proof}

\begin{coro}
   \label{Ko(v)=0}
   In the setting of \eqref{decLemm}\textup, the action of the stabilizer $\K=\N_\a$ on $\Z$ induces the trivial action of\/ $\Ko$\textup, so that $\Z$ carries a natural action of\/ $\EFG=\K/\Ko$.
\end{coro}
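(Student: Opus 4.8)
The plan is to show that every element of $\Ko$ acts as the identity on $\Z=\PI\inv(\a)$. First I would check that $\K$ does preserve $\Z$: if $\k\in\K$ and $\z\in\Z$, then by equivariance of $\PI$ we get $\PI(\k(\z))=\Ad^*_\k\PI(\z)=\Ad^*_\k\a=\a$, so $\k(\z)\in\Z$; hence $\K$ acts on $\Z$, and so does its subgroup $\Ko$. It remains to see this $\Ko$-action is trivial.

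The key idea is to differentiate and use the splitting \eqref{decTX} from Lemma \ref{decLemm}. Fix $\z\in\Z$ and take an element $\lk$ in the Lie algebra $\LK$ of $\K=\Ko$ (these have the same Lie algebra). The fundamental vector field value $\lk(\z)$ lies in $T_\z\N(\z)$ by definition. On the other hand, since $\lk\in\LK=\ker(\ad^*_{\!\cdot}\,\a$ evaluated appropriately$)$ — more precisely $\lk(\a)=0$ because $\K$ stabilizes $\a$ — we have $D\PI(\z)(\lk(\z))=\lk(\PI(\z))=\lk(\a)=0$ by equivariance of $\PI$, so $\lk(\z)\in\ker(D\PI(\z))=T_\z\Z$ by \eqref{lemmMW}. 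Therefore $\lk(\z)\in T_\z\N(\z)\cap T_\z\Z$, which is $\{0\}$ by the directness of the sum \eqref{decTX}. Thus every fundamental vector field of the $\K$-action vanishes identically along $\Z$, i.e.\ the infinitesimal action of $\LK$ on $\Z$ is zero.

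From the vanishing of the infinitesimal action I would conclude that the identity component $\Ko$ acts trivially: $\Ko$ is generated by $\exp(\LK)$, and for $\lk\in\LK$ the curve $t\mapsto\exp(t\lk)(\z)$ is the integral curve through $\z$ of the (identically zero along $\Z$) fundamental vector field $\lk$, hence is constant equal to $\z$. So $\exp(\lk)(\z)=\z$ for all $\lk\in\LK$ and all $\z\in\Z$, and since such elements generate $\Ko$, the whole of $\Ko$ fixes $\Z$ pointwise. Consequently the $\K$-action on $\Z$ factors through the quotient $\EFG=\K/\Ko$, giving $\Z$ the structure of a primary $\EFG$-space (the $\EFG$-action is symplectic because the $\K$-action is, and the \moment map is $\Z\to\{0\}$).

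I do not expect any serious obstacle here; the one point requiring a little care is the passage from ``$\lk(\z)=0$ for all $\z\in\Z$'' to ``$\exp(\lk)$ fixes $\Z$ pointwise,'' which uses that $\Z$ is invariant under the flow (already established, since $\K$ preserves $\Z$) so that the fundamental vector field of $\lk$ restricts to a vector field on $\Z$ — and that restricted field is zero. Everything else is a direct application of Lemma \ref{decLemm} and equivariance.
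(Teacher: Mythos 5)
Your argument is correct and is essentially the paper's own proof: both show $\LK(\z)\subset\LN(\z)\cap\ker(D\PI(\z))=\{0\}$ using the direct-sum decomposition \eqref{decTX}, and then integrate to conclude that $\Ko$ acts trivially. You merely spell out in more detail the preliminary step that $\K$ preserves $\Z$ and the passage from vanishing fundamental vector fields to triviality of the identity component, which the paper leaves implicit.
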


\begin{proof}
   The relation $D\PI(\z)(\LK(\z))=\LK(\a)=0$ shows that the tangent space $T_\z\K(\z)=\LK(\z)$ lies in the intersection $\LN(\z) \cap \ker(D\PI(\z))$, which we have just seen is zero. Thus $\LK$ acts trivially on $\Z$, and hence so does the identity component $\Ko$.
\end{proof}

\begin{proof}[Proof of the theorem]
   (i) We claim that the flat bundle \eqref{prodFib} built from $\Z=\PI\inv(\a)$ is isomorphic to $(\X,\w,\PI)$. To see this, let us denote it by $(\Y,\w_\Y,\PI_\Y)$. We have $\N$\nobreakdash-equivariant submersions $\pX,\pY$:
   \begin{equation}
      \label{isomXY}
      \raisebox{-2.5ex}{
      \begin{tikzpicture}[semithick,>=to]
         \node[anchor=east] (11) at (-1.8,0.55) {$\X$};
         \node              (12) at (0,0.55) {$\N\times\Z$};
         \node[anchor=west] (13) at (1.8,0.55) {$\Y$};
         \node[anchor=east] (21) at (-1.8,0) {$\n(\z)=:\x\,$};
         \node              (22) at (0,0) {$\ (\n,\z)\ $};
         \node[anchor=west] (23) at (1.8,0) {$\,\y:=\n([e\Ko,\z])$};
         \draw[<-] (11) to node[auto] {\smash{\scriptsize $\pX$}} (12);
         \draw[->] (12) to node[auto] {\smash{\scriptsize $\pY$}} (13);
         \draw[<-|] (21) -- (22);
         \draw[|->] (22) -- (23);
      \end{tikzpicture}
      }
   \end{equation}
   where $\N$ acts on $\N\times\Z$ via its left action on $\N$. Now since
      \begin{align}
         \label{fibers}
         \pY(\n',\z')=\pY(\n,\z)
         &\Leftrightarrow (\n'\Ko,\z')=(\n\Ko\efg\inv,\efg(\z))
         &
         &\text{for some $\efg\in\EFG$}
         \notag
         \\
         &\Leftrightarrow (\n',\z')=(\n\k\inv,\k(\z))
         &
         &\text{for some $\k\in\K$}
         \notag
         \\
         &\Leftrightarrow \n'(\z')=\n(\z)
         \text{\ \ and\ \ }
         \n\inv\n'\in\K
         &
         &
         \\
         &\Leftrightarrow \pX(\n',\z')=\pX(\n,\z),
         &
         &
         \notag
      \end{align}
   we see that the fibers of $\pX$ and $\pY$ define the same partition of $\N\times\Z$. So \eqref{isomXY} induces an $\N$-equivariant bijection $\X \to \Y$, which is a diffeomorphism by \cite[{}5.9.6]{Bourbaki:1967}, and which pulls $\PI_\Y$ back to $\PI$ since $\PI(\pX(\n,\z))=\n(\a)=\PI_\Y(\pY(\n,\z))$. We must still show that this diffeomorphism is symplectic. To this end, write
   \smash{\raisebox{-3.5pt}{%
   \hspace{-1ex}
   \begin{tikzpicture}[semithick,>=to]
     \node (21) at (-1.7,0) {\raisebox{-5pt}{\smash{$\d\x$}}};
     \node (22) at (0,0)    {\raisebox{-5pt}{\smash{$(\d\n,\d\z)$}}};
     \node (23) at (1.7,0)  {\raisebox{-5pt}{\smash{$\d\y$}}};
     \draw[<-|] (21) -- (22);
     \draw[|->] (22) -- (23);
   \end{tikzpicture}\hspace{-.5ex}}}
   for the diagram tangent to \eqref{isomXY}, i.\,e.
   \begin{equation}
     \d\x=\n_*(\ln(\z)+\d\z)
     \qquad
     \text{and}
     \qquad
     \d\y=\n_*([\ln(e\Ko),\d\z])
   \end{equation}
   where $\ln=\n\inv\d\n$ and the bracket denotes the derived projection $T\Ucov\times T\Z\to T\hspace{1pt}\Y$. Then we have 
      \begin{align}
         \qquad\qquad
         \w(\d\x,\d'\x)
         &=\w(\ln(\z)+\d\z,\ln'(\z)+\d'\z)
         &
         &\quad\text{since $\n^*\w=\w$}
         \notag
         \\
         &=\w(\ln(\z),\ln'(\z))+\w(\d\z,\d'\z)
         &
         &\quad\text{by \eqref{decTX}}
         \notag
         \\
         &=\<\a,[\ln',\ln]\>+\wZ(\d\z,\d'\z)
         &
         &\quad\text{by \cite[{}11.17$\sharp$]{Souriau:1970}}
         \\
         &=\w_\Y(\d \y,\d'\y)
         &
         &\quad\text{by definition,}
         \notag
      \end{align}
   as was to be shown. This completes the proof of (i).
   
   (ii) Isomorphic $\X_i$ give rise to isomorphic $\Z_i$. Indeed, let an isomorphism $F:\X_1\to\X_2$ be given. Then the relation $\PI_1=\PI_2\circ F$ implies that $F$ maps $\Z_1$ onto $\Z_2$ and induces a diffeomorphism $f$ between them. Since the $\Z_i$ have by construction the 2-form and action of $\efg\in\EFG$ induced by the $\w_i$ and by the action of some $\k\in\efg$ on the $\X_i$, it is clear that $f$ is another isomorphism. 
   
   Conversely, isomorphic $\Z_i$ give rise to isomorphic $\X_i$. Indeed, let an isomorphism $f:\Z_1\to\Z_2$ be given. Then in the diagram
   \begin{equation}
      \label{id_x_f}
      \raisebox{-6.4ex}{
   	\begin{tikzpicture}[semithick,>=to]
         \node (11) at (0,1.8) {$\Ucov\times\Z_1$};
         \node (12) at (4.6,1.8) {$\Ucov\times\Z_2$};
         \node (21) at (0,0) {$\Ucov\times_\EFG\Z_1$};
         \node (22) at (4.6,0) {$\Ucov\times_\EFG\Z_2$};
      	\draw[->] (11) to node[auto] {\smash{\scriptsize $\mathrm{id}\times f$}} (12);
      	\draw[densely dashed,->,color=gray] (21) to node[auto] {\color{gray}\scriptsize $F$} (22);
      	\draw[->] (11) to node[left] {\scriptsize $\mathrm{mod}\,\EFG$} (21);
      	\draw[->] (12) to node[auto] {\scriptsize $\mathrm{mod}\,\EFG$} (22);
   	\end{tikzpicture}}
   \end{equation}
   one verifies without trouble that the top arrow is an isomorphism of primary $\N\times\EFG$-spaces, relative to the 2-forms $\pU^*\wU+\w_{\Z_i}$ and actions $(\n,\efg)(\ucov,\z_i)=(\n\ucov\efg\inv,\efg(\z_i))$. So it descends to an isomorphism $F$ between the quotients, of which we already know by \eqref{decFibX} that they are isomorphic to $\X_1$ and $\X_2$.
\end{proof}

\begin{coro1}
   \label{trivCoro}
   If the pair $(\N,\U)$ is in any of the following cases\textup, then every primary $\N$-space over $\U$ splits trivially as $\X=\U\times\Z$\textup:
   \begin{compactenum}[\upshape (i)]
      \item points of\/ $\U$ have connected stabilizers in $\N$\textup;
      \item $\N$ is connected and $\U$ is simply connected\textup;
      \item $\N$ is connected and compact or abelian or nilpotent or exponential\textup;
      \item $\N$ is connected\textup, complex semisimple and $\U$ is a semisimple orbit\textup.
   \end{compactenum}
\end{coro1}

\begin{proof}
   (i) If $\K$ is connected, then $\EFG=\{1\}$ and \eqref{decFibX} reduces to \mbox{$\X=\U\times\Z$}; (ii) implies (i) by the homotopy exact sequence $\pi_1(\U)\to\pi_0(\K)\to\pi_0(\N)$; (iii,~iv) each imply (i) or (ii): see \cites[p.\,4]{Bernat:1972}[{}8.20]{Besse:1987}[{}2.3.4]{Collingwood:1993}.
\end{proof}

\begin{rema}
	The component groups $\EFG$ are also well known when $\N$ is connected solvable or simple: in the former case they are lattices $\ZZ^d$ \cite{Auslander:1971a}; in the latter, finite products of groups $\ZZ_d$, $S_3$, $S_4$, $S_5$ \cite{King:1992,Collingwood:1993}. Lest the reader conclude that they are always dull, we note here that every finite group occurs for some coadjoint orbit of some connected Lie group \cite[Prop.~1(iii)]{Pukanszky:1978}.
\end{rema}

As soon as $\EFG\ne\{1\}$ it becomes an issue to detect which primary spaces do or don't split. For this we have the following simple criteria.

\begin{coro2}
   \label{nontrivCoro}
   Let $\X$ be primary with fiber $\Z$. 
   \textup{(i)}
   If $\X$ splits trivially\textup, then $\EFG$ acts trivially on $\Z$. 
   \textup{(ii)}
   If $\X$ splits nontrivially\textup, then $\Z$ is not connected.
\end{coro2}

\begin{proof}
   (i) The hypothesis means that $(X,\w,\PI)\cong(\U\times \T,\w_\U+\w_\T,\textrm{pr}_1)$ for some $\T$ on which $\N$ acts trivially. By (\ref{decTheo}ii) it follows that $\Z\cong\{\a\}\times\T$, on which $\EFG$ acts trivially. (ii) Likewise if $(X,\w,\PI)\cong(\bar\U\times \T,\pi^*\w_\U+\w_\T,\pi\circ\textrm{pr}_1)$ for some nontrivial covering $\pi$ then $\Z\cong\pi\inv(c)\times\T$, which is not connected.
\end{proof}

\begin{remas}
	(i) We emphasize that --- thanks essentially to the rigidity of the \moment map --- these criteria are very much simpler than the topological arguments one would need to decide whether $\U\times_\EFG\Z$ is trivial \emph{as a bundle}, or whether it is \emph{homeomorphic} to a product $\bar\U\times\T$.
	
	(ii) It may also be worth noting that Theorem \eqref{decTheo} places any primary space \emph{half-way} between two products:
	\begin{equation}
	   \label{half-way}
	   \raisebox{-5ex}{
		\begin{tikzpicture}[semithick,>=to]
	   	\node (a) at (-2.3,0.65) {$\Ucov\times\Z$};
	   	\node (b) at (0,0) {$\X$};
	   	\node (c) at (2.3,-0.65) {$\U\times\Zdot,$};
	   	\draw[->] (a) to node[auto] {\scriptsize $\mathrm{mod}\ \EFG$} (b);
	   	\draw[->] (b) to node[auto] {\scriptsize $\PI\times\mathrm{mod}\ \N$} (c);
		\end{tikzpicture}}
	\end{equation}
	where $\Zdot$ is the quotient $\X/\N=\Z/\EFG$ (\textsc{Fig.\,\ref{fig:decomposition}}). However, the upper product here does not naturally carry an action of $\N$; and the lower one is only a (trivially split) primary space insofar as $\Zdot$ is a manifold, which fails in general: see e.g. Example \eqref{Kodaira-Thurston} below, where $\Zdot$ is not even an orbifold.
\end{remas}

\begin{exem}[Reduced spaces of Kazhdan \emph{et al.}]
   \label{KKS}
   An important source of primary spaces is the construction of Kazhdan-Kostant-Sternberg \cite[p.\,482]{Kazhdan:1978}: if $(\Y,\wY,\PIY)$ is an arbitrary hamiltonian $\N$-space, then under appropriate transversality hypotheses one can form the \emph{reduced space at $\U$},
   \begin{equation}
      \label{redKKS}
      \X=\PIY\inv(\U)/\ker(\wY)
   \end{equation}
   (the quotient of $\PIY\inv(\U)$, which is coisotropic, by its characteristic foliation). This is naturally a primary $\N$-space over $\U$, to which the above applies. In this setting we note that \eqref{decLemm}, (\ref{trivCoro}i) and a version of (\ref{decTheo}i) are found respectively in \cites[Prop.~1.1]{Kazhdan:1978}[Thm~26.6]{Guillemin:1984}[Thm~1]{Odzijewicz:1984}. (Our treatment avoids the assumption, made in these references, that $\U\hookrightarrow\LN^*$ be an embedding.) Also, the spaces $\Zdot$ and $\U\times\Zdot$ of \eqref{half-way} are none other than the quotients 
   \begin{equation}
      \PIY\inv(\a)/\N_\a
      \qquad\quad\text{and}\qquad\quad
      \PIY\inv(\U)/R
   \end{equation}
   of Marsden-Weinstein \cite{Marsden:1974} and Marle \cite{Marle:1976} (to which we refer for the definition of the equivalence relation $R$). In particular Marle's quotient always splits trivially, beyond the cases elucidated in \cite[Cor.~p.\,257]{Marle:1976}.
   \end{exem}

\section{Extensions of a primary $\N$-space}
\label{section2}

From here on we assume again that $\N$ is a closed normal subgroup of a Lie group $\G$. Our aim is to describe those $\N$-primary spaces $(\X,\w,\PI)$ over $\U$ that arise by \emph{restriction} of an action of the larger group --- i.e.~$\X$ admits a hamiltonian $\G$\nobreakdash-action, with equivariant \moment map $\PHI$, such that the following diagrams commute:
\begin{equation}
   \label{restriction}
   \raisebox{-3.5ex}{
	\begin{tikzpicture}[semithick,>=to]
   	\node (Diff) at (1.5,0.5) {D\rlap{iff$(\X,\w)$}};
   	\node (G) at (0,1) {$\G$};
   	\node (N) at (0,0) {$\N$};
   	\draw[->,dashed] (G) to node[above] {\scriptsize\qquad$\G$-action} (Diff);
   	\draw[->] (N) to node[below] {\scriptsize\qquad$\N$-action} (Diff);
   	\draw[->] (N) to (G);
   	\node (X) at (5,0.5) {$\X\ $};
   	\node (g*) at (6.5,1) {$\LG\rlap{$^*$}$};
   	\node (n*) at (6.5,0) {$\LN\rlap{$^*$.}$};
   	\draw[->,dashed] (X) to node[above] {\scriptsize $\PHI$} (g*);
   	\draw[->] (X) to node[below] {\scriptsize $\PI$} (n*);
   	\draw[->] (g*) to (n*);
	\end{tikzpicture}}
\end{equation}
Suppose for a moment that such an extended action exists. Because it normalizes $\N$, $\G$ has coadjoint actions on both $\LG^*$ and $\LN^*$, relative to which the second diagram \eqref{restriction} is equivariant. In particular $\G$ preserves the image of $\PI$, so $\U$ must belong to the set $(\LN^*/\N)^\G$ of \emph{$\G$-invariant} coadjoint orbits of $\N$. Under this assumption we have the following result, which in effect reduces the extension problem to the case where $\U$ is a point.

\begin{theo}
   \label{extTheo}
   Let $\N$ be normal in $\G$\textup, pick an orbit $\U=\N(\a)\in(\LN^*/\N)^\G$\textup, and write $\K$ and $\L$ for the stabilizers $\N_\a$ and $\G_\a$. Then the correspondence $\X\rightleftarrows\Z$ of Theorem \eqref{decTheo} naturally induces another between
\begin{equation*}
   \begin{gathered}
      \textit{$\N$-primary hamiltonian $\G$-spaces}\\[-.55ex]
      \textit{$(\X,\w,\PHI)$ over $\U$}
   \end{gathered}
   \quad\rightleftarrows\quad
   \begin{gathered}
      \textit{$\K$-primary hamiltonian $\L$-spaces}\\[-.55ex]
      \textit{$(\Z,\wZ,\PSI)$ over the point $\{\a_{|\LK}\}$.}
   \end{gathered}
\end{equation*}
\end{theo}

The proof will make repeated use of the following elementary observation, which essentially describes the trivial case where $\U=\{0\}$.

\begin{lemm} 
   \label{N_fixes_ann_n}
   If\/ $\N$ is normal in $\G$\textup, the annihilator 
   \mbox{$\ann_{\LG^*}(\LN)=\ker(\LG^*\to\LN^*)$} is 
   \begin{enumerate}[\upshape (i)]
   	\item invariant under the coadjoint action of\/ $\G$ on $\LG^*$\textup,
	   \item pointwise fixed by the action of\/ $\N$\textup, 
	   \item as a $\G/\N$-module\textup, canonically isomorphic to $(\LG/\LN)^*$.
   \end{enumerate}
\end{lemm}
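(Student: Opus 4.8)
This is pure linear algebra once one has isolated how normality of $\N$ controls the adjoint action, so the plan is to establish two structural facts about $\mathrm{Ad}$ and then read the three assertions off them.

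\emph{Two preliminary facts.} First, $\LN$ is stable under $\mathrm{Ad}_g$ for every $g\in\G$: conjugation $c_g\colon h\mapsto ghg\inv$ carries $\N$ into $\N$ by normality, so its differential at $e$, which is $\mathrm{Ad}_g$, carries $T_e\N=\LN$ into $\LN$. Hence $\mathrm{Ad}_g$ descends to an automorphism $\overline{\mathrm{Ad}}_g$ of $\LG/\LN$, and $p\circ\mathrm{Ad}_g=\overline{\mathrm{Ad}}_g\circ p$ for the canonical projection $p\colon\LG\to\LG/\LN$. Second, every $\n\in\N$ acts trivially on $\LG/\LN$, i.e.\ $\mathrm{Ad}_\n X-X\in\LN$ for all $X\in\LG$: for this I would observe that $c_\n$ descends, along the quotient homomorphism $q\colon\G\to\G/\N$, to conjugation by $q(\n)=e$ in $\G/\N$, that is to the identity; differentiating the resulting relation $q\circ c_\n=q$ at $e$ gives $p\circ\mathrm{Ad}_\n=p$, whence the claim. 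I expect this second fact to be the only genuine point of the proof: it must be argued at the group level, rather than by the familiar route ``$\LN$ is an ideal, so\ldots'', precisely because $\N$ and $\G$ are allowed to be disconnected.

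\emph{Deducing the three assertions.} (i) For $\xi\in\ann_{\LG^*}(\LN)$, $g\in\G$, $X\in\LN$ one has $\langle\mathrm{Ad}^*_g\xi,X\rangle=\langle\xi,\mathrm{Ad}_{g\inv}X\rangle=0$ by the first fact, so $\mathrm{Ad}^*_g\xi\in\ann_{\LG^*}(\LN)$. (ii) For $\xi\in\ann_{\LG^*}(\LN)$, $\n\in\N$, $X\in\LG$ one has $\langle\mathrm{Ad}^*_\n\xi-\xi,X\rangle=\langle\xi,\mathrm{Ad}_{\n\inv}X-X\rangle=0$ by the second fact, since the argument lies in $\LN$; hence $\mathrm{Ad}^*_\n\xi=\xi$. (iii) A linear form on $\LG$ factors through $p$ iff it kills $\LN$, so the transpose $p^*\colon(\LG/\LN)^*\to\LG^*$ is injective with image exactly $\ann_{\LG^*}(\LN)$, giving a canonical linear isomorphism $(\LG/\LN)^*\xrightarrow{\ \sim\ }\ann_{\LG^*}(\LN)$. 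By (ii) the coadjoint action of $\G$ on the target descends to $\G/\N$, and a one-line pairing computation $\langle p^*(\mathrm{Ad}^*_{q(g)}\eta),X\rangle=\langle\eta,\overline{\mathrm{Ad}}_{g\inv}p(X)\rangle=\langle\eta,p(\mathrm{Ad}_{g\inv}X)\rangle=\langle\mathrm{Ad}^*_g(p^*\eta),X\rangle$, valid for all $\eta\in(\LG/\LN)^*$ and $X\in\LG$, shows that $p^*$ intertwines this descended action with the coadjoint action of $\G/\N$ on $(\LG/\LN)^*$. This is the asserted isomorphism of $\G/\N$-modules, and the proof is complete.
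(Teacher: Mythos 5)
Your proof is correct and follows essentially the same route as the paper's: both hinge on the observation, made at the group level (hence valid without connectedness assumptions), that conjugation by $\n\in\N$ descends to the identity on $\G/\N$, and then obtain the three statements by differentiating and dualizing. The paper packages this as equivariance of the dual exact sequence $0\to(\LG/\LN)^*\to\LG^*\to\LN^*\to0$, whereas you unwind it into explicit pairing computations, but the content is identical.
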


\begin{proof}
   Write $\INT\h(\g)=\h\g\h\inv$ for the conjugation action of $\h\in\G$ on elements (or subsets) of $\G$. Since $\N$ is normal, $\INT\h$ maps each coset $\g\N$ to another such coset,
   \begin{equation}
      \label{conjugation}
      \INT\h(\g\N)=\h\g\N\h\inv=\INT\h(\g)\N.
   \end{equation}
   So we have an induced conjugation action on $\G/\N$ which is just the lift of its conjugation action on itself, for \eqref{conjugation} can also be written $(\h\N)\g\N(\h\N)\inv$. In other words, the exact sequence
   \begin{equation}
      \raisebox{-4.4ex}{
      \begin{tikzpicture}[semithick,>=to,->]
         \def\l{0.85}
         \def\cola{-3.8*\l}
         \def\colb{-2.0*\l}
         \def\colc{0.0*\l}
         \def\cold{2.0*\l}
         \def\cole{3.7*\l}
         \node (1) at (\cola,0) {$1$};
         \node (2) at (\colb,0) {$\N$};
         \node (3) at (\colc,0) {$\G$};
         \node (4) at (\cold,0) {\raisebox{-2ex}{$\G/\N$}};
         \node (5) at (\cole,0) {$1$};
         \node (4b) at (\cold,0) {$\phantom{\G}$};
         \path (1) edge (2) (2) edge (3) (3) edge (4) (4) edge (5)
               (2) edge [->,out=245,in=295,below,looseness=3] 
                    node {$\scriptstyle\G$} (2)
               (3) edge [->,out=245,in=295,below,looseness=3] 
                    node {$\scriptstyle\G$} (3)
               (4b) edge [->,out=245,in=295,below,looseness=3] 
                    node {$\scriptstyle\G$} (4b);
      \end{tikzpicture}}
   \end{equation}
   is $\G$-equivariant, and the third $\G$-action is in fact an action of $\G/\N$ (so $\N$ acts trivially). Deriving at $e$ and dualizing, we get exactness and equivariance of  
   \begin{equation}
      \label{dualSequence}
      \raisebox{-4.4ex}{
      \begin{tikzpicture}[semithick,>=to,->]
         \def\l{0.85}
         \def\cola{-3.8*\l}
         \def\colb{-2.0*\l}
         \def\colc{0.0*\l}
         \def\cold{2.0*\l}
         \def\cole{3.7*\l}
         \node (1) at (\cola,0) {$0$};
         \node (2) at (\colb,0) {$(\LG/\LN)^*$};
         \node (3) at (\colc,0) {$\LG^*$};
         \node (4) at (\cold,0) {\raisebox{.4ex}{$\LN^*$}};
         \node (5) at (\cole,0) {$0$};
         \node (2b) at (\colb-0.1*\l,0) {$\phantom{\G}$};
         \node (3b) at (\colc-0.1*\l,0) {$\phantom{\G}$};
         \node (4b) at (\cold-0.1*\l,0) {$\phantom{\G}$};
         \path (1) edge (2) (2) edge (3) (3) edge (4) (4) edge (5)
               (2b) edge [->,out=245,in=295,below,looseness=3] 
                    node {$\scriptstyle\G$} (2b)
               (3b) edge [->,out=245,in=295,below,looseness=3] 
                    node {$\scriptstyle\G$} (3b)
               (4b) edge [->,out=245,in=295,below,looseness=3] 
                    node {$\scriptstyle\G$} (4b);
      \end{tikzpicture}}
   \end{equation}
   relative to the 3 coadjoint actions, of which the first is in fact lifted from $\G/\N$ (so $\N$ acts trivially). This is precisely what the lemma claims.
   \end{proof}

\begin{proof}[Proof of the theorem]
   Suppose that $(\X,\w,\PHI)$ is given. The $\G$-equivari\-ance of $\PI$ ensures that the stabilizer $\L$ of $\a$ preserves $\Z=\PI\inv(\a)$. So $\Z$ becomes a hamiltonian $\L$-space, whose \moment map $\PSI$ we can define by the commutativity of   
   \begin{equation}
      \label{defPsi}
      \raisebox{-5.4ex}{
   	\begin{tikzpicture}[semithick,>=to]
         \node (10) at (-1,1.5) {$\X$};
         \node (11) at (1,1.5) {$\LG^*$};
         \node (12) at (3,1.5) {$\LN^*$\rlap{\ $\ni\a$}};
         \node (20) at (-1,0) {$\Z\vphantom{\int}$};
         \node (21) at (1,0) {$\LL^*$};
         \node (22) at (3,0) {$\LK^*$\rlap{\ $\ni\a_{|\LK}$.}};
         \draw[->] (11) -- (12);
         \draw[->] (21) -- (22);
         \draw[->] (11) -- (21);
         \draw[->] (12) -- (22);
         \draw[->] (10) to node[above] {\raisebox{0pt}{\scriptsize $\PHI$}} (11);
         \draw[->] (20) to node[above] {\raisebox{0pt}{\scriptsize $\PSI$}} (21);
         \draw[->] (20) -- (10);
         \draw (-1.1,0.353) arc (180:360:0.05);
   	\end{tikzpicture}}
   \end{equation}
   Because $\Z$ sits above $\a\in\LN^*$ by construction, it follows that it sits above $\a_{|\LK}$ under the composition $\Z\to\LL^*\to\LK^*$, which is the \moment map for the induced $\K$-action; in other words $\Z$ is $\K$-primary over $\{\a_{|\LK}\}$, as claimed.

   Conversely suppose that $(\Z,\wZ,\PSI)$ is given. We must construct on $\X:=\Ucov\times_\EFG\Z$ a $\G$-action and \moment map $\PHI$ satisfying \eqref{restriction}. The rough (but wrong) idea is that $\G$ acts separately on $\U$ and $\Z$, with \moment maps $\phiU$ and $\phiZ$ that add up to an equivariant $\PHI$. To make this correct we must in fact take a detour via $\Ucov$, where $\phiU$ will be defined, and an attendant $\Gcov$ which actually acts on $\Ucov$ and $\Z$. We do this in the following steps, where $\N\rtimes\L$ denotes the semidirect product with law $(\n,\l)(\n',\l')=(\n\AD\l(\n'),\l\l')$ (notation \eqref{conjugation}).

   1. 
   We can define a covering extension $\Gcov$ of $\G$ by $\EFG$ by the following commutative diagram of exact sequences, where $\antidiag\k=(\k\inv,\k)$ and $\proj(\n,\l)=\n\l$:
   \begin{equation}
      \label{Gcov}
      \raisebox{-14.5ex}{
      \begin{tikzpicture}[semithick,>=to,->]
         \def\l{0.85}
         \def\rowa{3.0*\l}
         \def\rowb{1.6*\l}
         \def\rowc{0.0*\l}
         \def\rowd{-1.6*\l}
         \def\rowe{-3.0*\l}
         \def\cola{-3.8*\l}
         \def\colb{-2.0*\l}
         \def\colc{0.0*\l}
         \def\cold{2.0*\l}
         \def\cole{3.7*\l}
         \node (12) at (\colb,\rowa) {$1$};
         \node (13) at (\colc,\rowa) {$1$};
         \node (14) at (\cold,\rowa) {};
         \node (21) at (\cola,\rowb) {$1$};
         \node (22) at (\colb,\rowb) {$\antidiag\Ko$};  
         \node (23) at (\colc,\rowb) {$\antidiag\Ko$};  
         \node (24) at (\cold,\rowb) {$1$};
         \node (25) at (\cole,\rowb) {};
         \node (31) at (\cola,\rowc) {$1$};
         \node (32) at (\colb,\rowc) {$\antidiag\K$};   
         \node (33) at (\colc,\rowc) {$\phantom{\N\times\G}$};
         \node (33bis) at (0.08*\l,\rowc) {$\N\rtimes\L$};
         \node (34) at (\cold,\rowc) {$\G$};
         \node (35) at (\cole,\rowc) {$1$};
         \node (41) at (\cola,\rowd) {$1$};
         \node (42) at (\colb,\rowd) {$\EFG$};
         \node (43) at (\colc,\rowd) {\raisebox{2.5pt}{$\cov\G$}};
         \node (44) at (\cold,\rowd) {$\G$};
         \node (45) at (\cole,\rowd) {$1$};
         \node (52) at (\colb,\rowe) {$1$};
         \node (53) at (\colc,\rowe) {$1$};
         \node (54) at (\cold,\rowe) {$1$\smash{\rlap{.}}};
         \foreach \from/\to in
            {     12/22,      13/23,
            21/22,      22/23,      23/24,
                  22/32,      23/33,      24/34,
            31/32,      32/33,                  34/35,
                  32/42,                  34/44,
                  42/52,      43/53,      44/54}
            \draw (\from) -- (\to);
         \foreach \from/\to in 
            {41/42,     42/43,      43/44,      44/45}
            \draw[densely dashed,color=gray] (\from) -- (\to);
         \draw (33) to node[right]{\color{gray}$\scriptstyle\bracl\cdot,\cdot\bracr$} (43);
         \draw (33bis) to node[pos=0.4,above]{$\scriptstyle\proj$} (34);
      \end{tikzpicture}}
   \end{equation}
   Indeed, $\proj$ clearly has kernel $\antidiag\K$ and is onto: given $\g\in\G$, we can find~$\n\in\N$ such that $\g(\a)=\n(\a)$, whence $g=\proj(\n,\n\inv\g)$. So the top and middle row are exact. In particular $\N\rtimes\L$ normalizes $\antidiag\K$, hence also its identity~component $\antidiag\Ko$ \cite[{}7.16]{Hewitt:1963}. Writing $\Gcov$ for the resulting quotient and
   \begin{equation}
      (n,l)\mapsto\bracl n,l\bracr
      \qquad\text{and}\qquad
      \gcov\mapsto\g
   \end{equation}
   for the projections $\N\rtimes\L\to\Gcov$ and $\Gcov\to\G$, we have exact columns too, so the nine lemma (or the second isomorphism theorem \cite[{}5.35]{Hewitt:1963}) completes the diagram. Before continuing we observe that $\bracl\cdot,e\bracr$ and $\bracl e,\cdot\bracr$ map $\N$ and $\L$ injectively onto isomorphic copies which we shall denote
   \begin{equation}
      \label{NcovLcov}
   	\Ncov = \bracl\N,e\bracr,
   	\qquad\qquad
   	\Lcov = \bracl e,\L\bracr.
   \end{equation}
   (The gain inside $\Gcov$ is that unlike $\N\cap\L$ ($=\K$), the intersection $\Ncov\cap\Lcov$ ($=\bracl\Ko,e\bracr$) is always connected.)
  
   2.
   We claim that the coadjoint action of $\G$ on $\U$ lifts to an action of $\Gcov$ on $\Ucov$. To see this, we note that \eqref{Gcov} contains the relations $\G=\N\L$ and $\Gcov=\Ncov\Lcov$. Therefore the first isomorphism theorem \cite[{}5.33]{Hewitt:1963} implies that all horizontal maps in the following commutative diagrams are bijections:
\begin{equation}
   \label{isom1}
   \phantom{U=\ }
   \raisebox{-6.4ex}{
	\begin{tikzpicture}[semithick,>=to]
      \node (11) at (0,1.8) {\llap{$\Ucov=$} $\N/\Ko$};
      \node (12) at (3.5,1.8) {$\Gcov/\Lcov$};
      \node (21) at (0.9,1.3) {\scriptsize$\n\Ko$};
      \node (22) at (2.5,1.3) {\scriptsize$\bracl\n,\L\bracr$};
      \node (31) at (0.9,0.5) {\scriptsize$\n\K$};
      \node (32) at (2.5,0.5) {\scriptsize$\n\L$};
      \node (41) at (0,0) {\llap{$\U=$} $\N/\K$};
      \node (42) at (3.5,0) {$\G/\L$};
   	\draw[->] (11) to node[auto] 
   	{\smash{\raisebox{-2pt}{\color{gray}$\scriptscriptstyle\sim$}}} (12);
   	\draw[|->,thin] (21) -- (22);
   	\draw[|->,thin] (31) -- (32);
   	\draw[->] (41) to node[auto] 
   	{\smash{\raisebox{-2pt}{\color{gray}$\scriptscriptstyle\sim$}}} (42);
   	\draw[->] (11) to node[left] {\scriptsize$\pU$} (41);
   	\draw[|->,thin] (21) -- (31);
   	\draw[->] (12) -- (42);
   	\draw[|->,thin] (22) -- (32);
      \node (13) at (5.5,1.8) {$\L/\Ko$};
      \node (14) at (9,1.8) {$\Gcov/\Ncov$};
      \node (23) at (6.5,1.3) {\scriptsize$\l\Ko$};
      \node (24) at (8,1.3) {\scriptsize$\bracl\N,\l\bracr$};
      \node (33) at (6.5,0.5) {\scriptsize$\l\K$};
      \node (34) at (8,0.5) {\scriptsize$\l\N$};
      \node (43) at (5.5,0) {$\L/\K$};
      \node (44) at (9,0) {$\G/\N$.};
      \draw[->] (13) to node[auto] 
      {\smash{\raisebox{-2pt}{$\color{gray}\scriptscriptstyle\sim$}}} (14);
      \draw[|->,thin] (23) -- (24);
      \draw[|->,thin] (33) -- (34);
      \draw[->] (43) to node[auto] 
      {\smash{\raisebox{-2pt}{$\color{gray}\scriptscriptstyle\sim$}}} (44);
      \draw[->] (13) -- (43);
      \draw[|->,thin] (23) -- (33);
      \draw[->] (14) -- (44);
      \draw[|->,thin] (24) -- (34);
	\end{tikzpicture}}
\end{equation}
Now the first diagram identifies our $\N$-equivariant covering  \eqref{Ucov}, on the left, with the $\Gcov$\nobreakdash-equivariant covering on the right. This equips $\Ucov$ with an action of $\Gcov$ which fits the bill and works out, since $\bracl\n,\l\bracr\bracl\n',\L\bracr =\bracl\n\AD\l(\n'),\L\bracr$, as
\begin{equation}
   \label{actionGcovUcov}
   \bracl\n,\l\bracr(\ucov)=n\AD\l(\ucov),\rlap{\qquad$\ucov=n'\Ko\in N/\Ko$.}
\end{equation}

3. 
This action \eqref{actionGcovUcov} preserves $\wUcov=\pU^*\wU$ and admits a (not necessarily equivariant) \moment map $\phiU:\Ucov\to\LG^*$ given by 
\begin{equation}
   \label{momentGcovUcov}
   \phiU(\ucov)=\ucov(\acheck),
\end{equation}
where we fix henceforth an element $\acheck\in\LG^*$ projecting to $\a\in\LN^*$. Indeed, first we note that \eqref{momentGcovUcov} is well-defined, i.e.~all elements of a coset $\ucov=n\Ko$ have the same effect on $\acheck$. This is because any two differ by an element of $\Ko$, which acts trivially by \eqref{Ko(v)=0} applied to the orbit $\G(\acheck)$ (which is clearly $\N$\nobreakdash-primary over $\U$). Next, \eqref{momentGcovUcov} is a \moment map because we have, for $\ucov=n\Ko$ and all $(\ln,\lg)\in\LN\times\LG$,
   \begin{align}
   \smash[b]{\left[\I_\lg\wUcov
   +d\<\phiU(\cdot),\lg\>\right]}(\ln(\ucov))
   &=\left.\wUcov(\lg(\ucov),\ln(\ucov))
   +\tfrac{d}{dt}\<\phiU(\e{t\ln}(\ucov)),\lg\>\right|_{t=0}
   \notag
   \\
   &=\left.\wU(\lg(\u),\ln(\u))
   +\tfrac{d}{dt}\<\e{t\ln}(\n(\acheck)),\lg\>\right|_{t=0}
   \notag
   \\
   &=\<\n(\a),[\ln,\lg]\>+\<\n(\acheck),[\lg,\ln]\>
   \\
   &=0.
   \notag
   \end{align}
If $\Gcov$ is disconnected we still need to check that the action \eqref{actionGcovUcov} preserves $\wUcov$. Since $\pU$ in \eqref{isom1} intertwines it with the coadjoint action of $\G$ on $\U$, it suffices to show that $g^*\wU=\wU$, which is true because we have, for all $\ln,\ln'\in\LN$,
   \begin{align}
      \g^*\wU(\ln(\u),\ln'(\u))
      &=\wU(\g_*(\ln(\u)),\g_*(\ln'(\u)))
      \notag
      \\
      &=\wU((\Ad_\g(\ln))(\g(\u)),(\Ad_\g(\ln'))(\g(\u)))
      \notag
      \\
      &=\<\g(\u),[\Ad_\g(\ln'),\Ad_\g(\ln)]\>
      \\
      &=\<\u,[\ln',\ln]\>
      \notag
      \\
      &=\wU(\ln(\u),\ln'(\u)).
      \notag
   \end{align}

   4. 
   The given action of $\L$ on $\Z$ induces an $\wZ$-preserving action of $\Gcov$ on $\Z$, viz.
   \begin{equation}
      \label{actionGcovZ}
      \bracl\n,\l\bracr(\z)=\l(\z).
   \end{equation}
   Indeed, this is well-defined because $\bracl\n,\l\bracr$ determines $\l\in\L$ up to multiplication by an element of $\Ko$, which we know acts trivially on $\Z$ (by Cor.~\ref{Ko(v)=0}, applied this time to $\Z$ as a $\K$-primary $\L$-space). Moreover it preserves $\wZ$, since the given action of $\L$ does.
   
   5. 
   This action \eqref{actionGcovZ} admits a (not necessarily equivariant) \moment map $\phiZ:\Z\to\LG^*$ given by the following formula, where $\pGLK^*$ denotes the injection \smash{$(\LL/\LK)^*\xrightarrow[\smash{\raisebox{3.5pt}{$\scriptscriptstyle{\eqref{isom1}}$}}]{}(\LG/\LN)^*\xrightarrow[\smash{\raisebox{3.5pt}{$\scriptscriptstyle{\eqref{dualSequence}}$}}]{}\LG^*$} viewed as a map $\ann_{\LL^*}(\LK)\to\ann_{\LG^*}(\LN)$ \eqref{N_fixes_ann_n}:
   \begin{equation}
      \label{momentGcovZ}
      \phiZ(\z)=\pGLK^*(\PSI(\z)-\acheck_{|\LL}).
   \end{equation}
   This is well-defined because our hypothesis that $\Z$ is $\K$-primary over $\{\a_{|\LK}\}$ ensures $\PSI(\z)-\acheck_{|\LL}\in\ann_{\LL^*}(\LK)$. To see that it is a \moment map, we note that (\ref{Gcov}) implies that every $\lg\in\LG$ can be written $\ln+\ll$ for some $(\ln,\ll)\in\LN\rtimes\LL$. Then,
	   \begin{align}
	      \I_{\lg}\wZ+d\<\phiZ(\cdot),\lg\>
         &=\I_{\ll}\wZ+d\<\phiZ(\cdot),\lg\>
         &
         &\quad\text{by \eqref{actionGcovZ}}
         \notag
         \\
         &=\I_{\ll}\wZ+d\<\phiZ(\cdot),\ll\>
         &
         &\quad\text{since $\text{range}(\phiZ)\subset\ann_{\LG^*}(\LN)$}
         \notag
         \\
         &=\I_{\ll}\wZ+d\<\PSI(\cdot),\ll\>
         &
         &\quad\text{since $\<\acheck,\ll\>$ is constant}
         \\
         &=0
         &
         &\quad\text{since $\PSI$ is a \moment map.}
         \notag
	   \end{align}

      6. 
      Putting the actions \eqref{actionGcovUcov} and \eqref{actionGcovZ} together, we get an action of $\Gcov$ on $\Ucov\times\Z$ which we claim is hamiltonian, i.e.~its \moment map $\phiU+\phiZ$ is $\Gcov$-equivariant. In other words we will show that $\theta_\Ucov+\theta_\Z=0$ where
      \begin{equation}
         \label{theta}
	      \begin{aligned}
	         \theta_\Ucov(\gcov) &:= \phiU(\gcov(\ucov))-g(\phiU(\ucov)),\\
	         \theta_\Z(\gcov) &:= \rlap{$\,\phiZ(\gcov(\z))$}\phantom{\phiU(\gcov(\ucov))}-g(\phiZ(\z)).
	      \end{aligned}
      \end{equation}
      To work out these `non-equivariance cocycles' (which as the notation anticipates, will be independent of $\ucov$ and $\z$; note that for disconnected $\X$ this is emphatically \emph{not} guaranteed by the general theory \cites[{}11.17]{Souriau:1970}[{}4.5.21]{Ortega:2004a}), we pick $\gcov=\bracl\n,\l\bracr$ projecting to $\g=\n\l$ and compute, on the $\Ucov$ side,
      \begin{align}
         \theta_\Ucov(\gcov) 
         &= \phiU(\n\AD\l(\ucov))-(\n\l)(\phiU(\ucov))
         &
         &\text{by \eqref{actionGcovUcov}}
         \notag\\
         &= (\n\AD\l(\ucov)-\n\l\ucov)(\acheck)
         &
         &\text{by \eqref{momentGcovUcov}}
         \notag\\
         &= \n\AD\l(\ucov)(\acheck-\l(\acheck))
         \notag\\
         &= \acheck-\l(\acheck)
         &
         \label{thetaU}
         &\text{by Lemma (\ref{N_fixes_ann_n}ii),}
      \intertext{which lemma applies because $\l\in\G_\a$ implies $\<\acheck-\l(\acheck),\LN\>=\<\a-\l(\a),\LN\>=0$, i.e.~we have
      \begin{equation}
         \label{cocycle}
      	\acheck-\l(\acheck)\in\ann_{\LG^*}(\LN).
      \end{equation}
      Meanwhile on the $\Z$ side we find, for all $\lg=\ln+\ll$ in $\LG=\LN+\LL$,}
         \<\theta_\Z(\gcov),\lg\>
         &= \<\phiZ(\l(\z))-(\n\l)(\phiZ(\z)),\lg\>
         &
         &\text{by \eqref{actionGcovZ}}
         \notag\\
         &= \<\phiZ(\l(\z))-\l(\phiZ(\z)),\lg\>
         &
         &\text{by (\ref{N_fixes_ann_n}i,\,ii)}
         \notag\\
         &= \<\phiZ(\l(\z))-\l(\phiZ(\z)),\ll\>
         &
         &\text{since $\text{range}(\phiZ)\subset\ann_{\LG^*}(\LN)$}
         \notag\\
         &= \<\PSI(\l(\z))-\acheck_{|\LL}-\l(\PSI(\z)-\acheck_{|\LL}),\ll\>
         &
         &\text{by \eqref{momentGcovZ}}
         \\
         &= \<\l(\acheck)-\acheck,\ll\>
         &
         &\text{since $\PSI$ is $\L$-equivariant}
         \notag\\
         &= \<\l(\acheck)-\acheck,\lg\>
         &
         &\text{by \eqref{cocycle}.}
         \notag
      \end{align}
      Thus we have $\theta_\Ucov+\theta_\Z=0$, which proves that $\phiU+\phiZ$ is $\Gcov$-equivariant.       

      7.
      Next we consider the subgroup $\EFG=\{\bracl\k\inv,\k\bracr:\k\in\K\}$
in \eqref{Gcov} and observe that its orbits in $\Ucov\times\Z$, under the action just considered, are exactly the points of $\X=\Ucov\times_\EFG\Z$. Since $\Gcov$ normalizes $\EFG$, it follows that its action takes $\EFG$-orbit to $\EFG$-orbit and hence descends to an action of $\G=\Gcov/\EFG$ on $\X$, viz.
\begin{equation}
   \label{actionGX}
	\n\l([\ucov,\z])=[\n\AD\l(\ucov),\l(\z)].
\end{equation}
Moreover the $\EFG$-equivariance of $\phiU+\phiZ$ means precisely that this map is constant on $\EFG$-orbits, hence descends to an equivariant \moment map $\PHI:\X\to\LG^*$. Tracing through its construction yields explicitly, 
with notation as above,
\begin{equation}
   \label{momentGX}
	\<\PHI([\ucov,\z]),\ln+\ll\>=\<\pU(\ucov),\ln\>+\<\ucov(\acheck)-\acheck,\ll\>+\<\PSI(\z),\ll\>.
\end{equation}
Finally we check that $\PHI$ does not depend on the choice of $\acheck$. Indeed if $\acheck'\in\LG^*$ is another choice projecting to $\a\in\LN^*$, then the difference $z=\acheck'-\acheck$ lies in $\ann_{\LG^*}(\LN)$. So (\ref{N_fixes_ann_n}ii) implies $\ucov(z)-z=0$, which shows that the change leaves \eqref{momentGX} unaltered. This completes the proof.
\end{proof}

As in \eqref{decTheo}, $\X$ and $\Z$ determine each other to within isomorphism. More precisely we have the following proposition, whose proof is straightforward and left to the reader.
    
\begin{prop}
   Under the correspondence of Theorem \eqref{extTheo}\textup,
   \begin{enumerate}[\upshape (i)]
   	\item Two $\N$-primary hamiltonian $\G$-spaces $\X_1,\X_2$ over $\U$ are isomorphic iff the corresponding $\Z_i=\PI\sub{i}\inv(\a)$ are isomorphic as hamiltonian $\L$\nobreakdash-spaces. 
   	\item $\X$ is $\G$-homogeneous iff\/ $\Z$ is $\L$-homogeneous\textup, and $\X$ is the coadjoint orbit $\G(\acheck)$ \textup(where $\smash{\acheck_{|\LN}}=c$\textup) iff\/ $\Z$ is the coadjoint orbit $\L(\smash{\acheck_{|\LL}})$. 
   \end{enumerate}
\end{prop}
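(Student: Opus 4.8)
The plan is to read everything off the explicit data already produced in the proofs of Theorems \eqref{decTheo} and \eqref{extTheo}: the identification \eqref{decFibX}, the formula \eqref{actionGX} for the $\G$-action, the formula \eqref{momentGX} for $\PHI$, and the defining square \eqref{defPsi} for $\PSI$. No new geometric input should be needed. For (i), in the direction $\X_1\cong\X_2\Rightarrow\Z_1\cong\Z_2$ I would repeat the argument of (\ref{decTheo}ii): an isomorphism $F\colon\X_1\to\X_2$ of hamiltonian $\G$-spaces intertwines the $\N$-\moment maps $\PI_i$ (these being $\PHI_i$ followed by $\LG^*\to\LN^*$), hence restricts to a diffeomorphism $f\colon\Z_1\to\Z_2$; since $\L\subset\G$ preserves each $\Z_i$, this $f$ is $\L$-equivariant; and \eqref{defPsi} shows at once that $f$ carries the symplectic form and the \moment map $\PSI_1$ of $\Z_1$ to those of $\Z_2$. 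Conversely, given an isomorphism $f\colon\Z_1\to\Z_2$ of hamiltonian $\L$-spaces, $\mathrm{id}\times f$ is symplectic and $\Gcov$-equivariant for the actions \eqref{actionGcovUcov}, \eqref{actionGcovZ} (on the $\Z$-factor this is merely the $\L$-equivariance of $f$), so it descends modulo $\EFG$ to a $\G$-equivariant symplectic $F\colon\X_1\to\X_2$; that $F$ is moreover an isomorphism of the underlying primary $\N$-spaces is already contained in diagram \eqref{id_x_f}, and $F$ respects the \moment maps because $F[\ucov,\z]=[\ucov,f(\z)]$ while $\PSI_2\circ f=\PSI_1$ leaves the right-hand side of \eqref{momentGX} unchanged.

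For the first assertion of (ii) I would chase orbits with \eqref{actionGX}. If $\X$ is $\G$-homogeneous and $\z,\z'\in\Z$, choose $\g=\n\l\in\G=\N\L$ with $\g[e\Ko,\z]=[e\Ko,\z']$; by \eqref{actionGX} the left side is $[\n\Ko,\l(\z)]$, so comparing $\Ucov$-coordinates (equivalently, applying $\PI$) forces $\n\in\K$, and then \eqref{fibers} gives $\z'=(\n\l)(\z)$ with $\n\l\in\L$ (since $\K\subset\L$); thus $\Z$ is a single $\L$-orbit. Conversely, if $\Z$ is $\L$-homogeneous, then for arbitrary $[\n\Ko,\z]$ and $[\n'\Ko,\z']$ in $\X$ one picks $\l\in\L$ with $\l(\z)=\z'$ and checks directly from \eqref{actionGX} that $\g=\n'\l\n\inv\in\G$ sends the first point to the second; so $\X$ is $\G$-homogeneous.

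For the second assertion, take $\X=\G(\acheck)$ with $\PHI$ the inclusion, so that $\PI$ and $\PSI$ become restriction of functionals to $\LN$ and to $\LL$; by what was just proved $\Z=\PI\inv(\a)$ is $\L$-homogeneous, hence $\PSI\colon\Z\to\LL^*$ is a covering of a coadjoint orbit of $\L$, and since $\acheck_{|\LN}=\a$ gives $\acheck\in\Z$, that orbit is $\L(\acheck_{|\LL})$; the covering is bijective because a functional on $\LG=\LN+\LL$ (see \eqref{Gcov}) is determined by its restrictions to $\LN$ and $\LL$, the former being constantly $\a$ on $\Z$, whence $\Z\cong\L(\acheck_{|\LL})$. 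The converse is then immediate from (i): if $\Z\cong\L(\acheck_{|\LL})$ it is isomorphic to the fiber of $\G(\acheck)$ just computed, so $\X\cong\G(\acheck)$.

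I do not expect a genuine obstacle here --- this is surely why the proof was left to the reader --- but the step most prone to slips is the verification that $\g=\n'\l\n\inv$ works in the reverse half of (ii), where one must keep in mind that $\Ucov=\N/\Ko$ is only a homogeneous space and that the $\AD\l$-conventions of \eqref{actionGX} and \eqref{isom1} must be matched exactly; the one genuinely non-formal input is the relation $\LN+\LL=\LG$ of \eqref{Gcov}, which is also what makes $\PSI$ injective above. Everything else is bookkeeping with formulas already in hand.
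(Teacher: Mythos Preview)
Your proposal is correct. The paper itself leaves this proposition to the reader (``whose proof is straightforward and left to the reader''), so there is no authors' proof to compare against; your argument is exactly the kind of bookkeeping with \eqref{decFibX}, \eqref{defPsi}, \eqref{actionGX}, \eqref{momentGX}, and $\LG=\LN+\LL$ that the authors evidently had in mind, and the one step you flagged as delicate---checking that $g=n'\ell n^{-1}$ does the job via \eqref{actionGX}---goes through once you rewrite $g=n'\operatorname{Ad}_\ell(n^{-1})\cdot\ell$ and use that $\operatorname{Ad}_\ell$ preserves $\Ko$.
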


\section{Kœnig's theorem and the Mackey obstruction}

In this section we spell out three corollaries of Theorem \eqref{extTheo} and its proof. We~keep the notations $\K$, $\EFG$, $\Ucov$, $\Gcov$, $\Ncov$, $\Lcov$ established in (\ref{Ucov}--\ref{EFG}) and (\ref{Gcov}--\ref{NcovLcov}). The first corollary merely records how we reconstructed the \moment map $\PHI$ from two independent pieces:

\begin{coro1}[Generalized Kœnig Theorem]
   \label{KoenigTheo}
   Let $(\X,\w,\PHI)$ be an $\N$\nobreakdash-pri\-mary hamiltonian $\G$-space over $\U$ with fiber $\Z$. Then not only does $\G$ act on $\X=\Ucov\times_\EFG\Z$\textup, but the product $\Gcov\times\Gcov$ acts on $\Ucov\times\Z$ with \moment map $(\phiU, \phiZ)$ such that
   \begin{equation}
      \Phi([\ucov,\z]) = \phiU(\ucov) + \phiZ(\z).
   \end{equation}
   The second action is really an action of $\Gcov/\Ncov$ with \moment map $\phiZ:\Z\to\ann_{\LG^*}(\LN)$.
\end{coro1}

Unlike $\PHI$, the maps $\phiU$, $\phiZ$, $\theta_\Ucov$, $\theta_\Z$ do depend on the choice of $\acheck$ such that $\smash{\acheck_{|\LN}}=c$. The next corollary extracts the intrinsic part of this construction:
    
\begin{coro2}
   \label{obstCoro}
   Attached to each $\U=\N(\a)\in(\LN^*/\N)^\G$ is a well-defined 
   cohomology class $[\theta_\Ucov]\in H^1(\Gcov/\Ncov,(\LG/\LN)^*)$ which measures the obstruction to making $\U$ a hamiltonian $\G$-space\textup, and vanishes if $\smash{\a_{|\LK}}=0$. If $\smash{\a_{|\LK}}\ne0$\textup, the derived class $[\<D\theta_\Ucov(e)(\cdot),\cdot\>]\in H^2(\LG/\LN,\RR)$ is that of the central extension
   \begin{equation}
      \label{extension}
      0
      \longrightarrow
      \LK/\LJ
      \longrightarrow
      \LL/\LJ
      \longrightarrow
      \LL/\LK
      \longrightarrow
      0
   \end{equation}
   where $\LJ=\ker(\smash{\a_{|\LK}})$.
\end{coro2}
    
\begin{proof}
   These are all elementary properties of the function $\theta_\Ucov$ in \eqref{thetaU}. First, by an easy application of (\ref{N_fixes_ann_n}ii), it satisfies the cocycle identity $\theta_\Ucov(\gcov\gcov') = \theta_\Ucov(\gcov)+\g(\theta_\Ucov(\gcov'))$ which defines $Z^1(\Gcov,\LG^*)$ \cite[p.\,112]{Bourbaki:1980}. As it takes its values in $\ann_{\LG^*}(\LN)$ \eqref{cocycle} and clearly only depends on $\gcov=\bracl\n,\l\bracr$ via its class $\bracl\N,\l\bracr$, we may equally regard it as a member of $Z^1(\Gcov/\Ncov,(\LG/\LN)^*)$. Moreover, replacing $\acheck$ by $\acheck+z$, also projecting on $\a\in\LN^*$, alters the cocycle by a term $z-\l(z)$ which is the coboundary of $-z\in\ann_{\LG^*}(\LN)$. So the resulting cohomology class $[\theta_\Ucov]$ does not depend on our choice of $\acheck$. Next, if $\smash{\a_{|\LK}}=0$ then we may take $\acheck$ to be $\a$ on $\LN$ and $0$ on $\LL$, in which case the cocycle vanishes:
      $
      \<\acheck-\l(\acheck),\LN+\LL\>
      =\<c-\l(c),\LN\>+\<0-\l(0),\LL\>
      =0
      $
   since $\l$ stabilizes $\a$. Finally we consider the derivative $f_{\,\Ucov}=\<D\theta_\Ucov(e)(\cdot),\cdot\>$:
   \begin{align}
      f_{\,\Ucov}(\ln+\ll,\ln'+\ll')
      &=
      \tfrac d{dt}\left.
      \<\theta_\Ucov(\bracl\e{t\ln},\e{t\ll}\bracr),\ln'+\ll'\>
      \right|_{t=0}
      \notag
      \\
      &=
      \tfrac d{dt}\left.
      \<\acheck-\e{t\ll}(\acheck),\ln'+\ll'\>
      \right|_{t=0}
      \notag
      \\
      &=\<-\ll(\acheck),\ll'\>
      &
      &\text{since $\<\LL(\a),\LN\>=0$}
      \notag
      \\
      \label{fUcov}
      &=\<\acheck,[\ll,\ll']\>.
   \end{align}
   As above one verifies that this defines a cocycle in $Z^2(\LG/\LN,\RR)$ \cite[p.\,196]{Bourbaki:1980} whose class in $H^2(\LG/\LN,\RR)=H^2(\LL/\LK,\RR)$ \eqref{isom1} does not depend on the choice of $\acheck$. To see that it coincides with the cohomology class of the extension \eqref{extension}, we recall that the latter is that of the cocycle \cite[p.\,198]{Bourbaki:1980} 
   \begin{align}
       f(\ll+\LK,\ll'+\LK)
       &=
       \<\a,[s(\ll+\LK),s(\ll'+\LK)] - s([\ll+\LK,\ll'+\LK])\>
   \intertext{where $s$ is any linear section of the projection $\LL/\LJ\to\LL/\LK$, and where we have identified $\LK/\LJ$ with $\RR$ by sending $\varkappa+\LJ$ to $\<\a,\varkappa\>$. Choosing $s(\ll+\LK)=q(\ll)+\LJ$ where $p$ (resp.~$q$) is the projection onto $\LK$ (resp.~$\Lie v$) associated to a vector space decomposition $\LL=\LK\oplus\Lie v$, this is}
      &=
      \<\a,[q(\ll),q(\ll')] - q([\ll,\ll'])\>
      \notag
      \\
      &=
      \<\a,[\ll-p(\ll),\ll'-p(\ll')] - [\ll,\ll']+p([\ll,\ll'])\>
      \notag
      \\
      \label{fext}
      &=
      \<\a\circ p,[\ll,\ll']\>
   \end{align}
   since $\<\LL(\a),\LN\>=0$. Now $\a\circ p$ defines an extension of $\a$ to $\LL$ and thereby to $\LG=\LN+\LL$, which we can assume coincides with $\acheck$ (by changing the latter if necessary). So \eqref{fUcov} and \eqref{fext} belong to the same cohomo\-logy class, as was to be shown.
\end{proof}

\begin{defi}
   \label{obstruction}
   We shall refer to the class $[\theta_\Ucov]$ in \eqref{obstCoro} as the \emph{\textbf{symplectic Mackey obstruction}} of $\U$, and to the class $[\<D\theta_\Ucov(e)(\cdot),\cdot\>]$, or the central extension \eqref{extension} it represents, as the \emph{\textbf{infinitesimal Mackey obstruction}} of $\U$.
\end{defi}
	    
Given $\Theta\in H^1(\G,\LG^*)$ let us also agree to call \emph{\textbf{hamiltonian $(G,\Theta)$\nobreakdash-space}} a symplectic manifold $(\X,\w)$ with an $\w$-preserving action of $\G$ which admits a \moment map $\phi$ satisfying $\phi(g(x))=g(\phi(x))+\theta(g)$ identically for some $\theta\in\Theta$. (We emphasize that $\phi$ is not part of the structure: only its existence is.) With this in hand we can rephrase our solution \eqref{extTheo} of the extension problem in a form more closely parallel to the representation-theoretic version (\ref{step3}), namely:

\begin{coro3}
   Let $\N$ be normal in $\G$ and let $\U\in(\LN^*/\N)^\G$ have symplectic Mackey obstruction $\Theta$. Then the correspondence $\X\rightleftarrows\Z$ of Theorem \textup{\ref{extTheo}} can be regarded as a correspondence
   \begin{equation}
      \begin{gathered}
         \textit{$\N$-primary hamiltonian}\\[-.55ex]
         \textit{$\G$-spaces $\X$ over $\U$}
      \end{gathered}
      \qquad\rightleftarrows\qquad
      \begin{gathered}
         \textit{arbitrary hamiltonian}\\[-.55ex]
         \textit{$(\Gcov/\Ncov, -\Theta)$-spaces $\Z$.}
      \end{gathered}
   \end{equation}
\end{coro3}

\begin{proof}
   Let $(\X,\w,\PHI)$ be $\N$-primary over $\U$, and consider the corresponding $\K$-primary $\L$-space $(\Z,\wZ,\PSI)$ given by Theorem \eqref{extTheo}. We know \eqref{Ko(v)=0} that $\Ko$ acts trivially so that $\Z$ is in fact a $\L/\Ko=\Gcov/\Ncov$-space \eqref{isom1}. Moreover we have seen that this action admits a \moment map $\phiZ:\Z\to(\LG/\LN)^*$ \eqref{momentGcovZ} whose non-equivariance cocycle \eqref{theta} has cohomology class $-\Theta$.
   
   Conversely, let $(\Z,\wZ)$ be any hamiltonian $(\Gcov/\Ncov, -\Theta)$-space. By \eqref{isom1} we can regard it as a hamiltonian $(\L/\Ko, -\Theta)$-space, $\Theta\in H^1(\L/\Ko, (\LL/\LK)^*)$ being the class of the cocycle $\theta_\Ucov(\l\Ko)=\smash{(\acheck-\l(\acheck))_{|\LL}}\in\ann_{\LL^*}(\LK)$ (\ref{N_fixes_ann_n}iii, \ref{thetaU}, \ref{cocycle}). By definition this means that we can choose a \moment map $\phiZ:\Z\to\ann_{\LL^*}(\LK)$ so that 
\begin{equation}
   \phiZ(\l(\z))=\l(\phiZ(\z))+(\l(\acheck)-\acheck)_{|\LL}
\end{equation}
(equality in $\ann_{\LL^*}(\LK)=(\LL/\LK)^*$). Now, lifting the action from $\L/\Ko$ to $\L$ enables us to add $\acheck_{|\LL}$ to both sides to obtain the $\L$-equivariance of $\PSI:=\phiZ(\cdot)+\smash{\acheck_{|\LL}}$. Thus $(\Z,\wZ,\PSI)$ is a $\K$-primary hamiltonian $\L$-space over $\smash{\acheck_{|\LK}}$. By \eqref{extTheo} this corresponds to an $\N$-primary hamiltonian $\G$-space over $\U$, as desired.
\end{proof}

\begin{rema}
	Another justification for Definition \eqref{obstruction} is that it appears to correctly predict, at least at the Lie algebra level, the Mackey obstruction (\ref{step3}) of the representation(s) obtained by ``quantizing'' $\U$. To be specific, let us first note that \eqref{extension} does not always integrate to a Lie group extension. (Indeed it is not hard to construct examples where the subgroup $J\o$ generated by $\LJ$ in \eqref{extension} is not even closed.) However, when $\U$ is \emph{integral} in the sense that $\K$ admits a character $\chi$ with differential $i\smash{c_{|\LK}}$, then $J=\ker(\chi)$ is closed and the extension
	\begin{equation}
	   1
	   \longrightarrow
	   \K/J
	   \longrightarrow
	   \L/J
	   \longrightarrow
	   \L/\K
	   \longrightarrow
	   1
	\end{equation}
	integrates \eqref{extension} and is precisely the Mackey obstruction found in \cites[Thm IV.4.1]{Auslander:1971a}[§6]{Duflo:1970}, in the case where $\N$ is nilpotent and $\G$ is solvable.
\end{rema}

\section{Examples}
\begin{exemSSD}

We consider here the group $\G$ of all block upper triangular matrices of the form
\begin{equation}
  g=
  \begin{pmatrix}
  1&\overline{\bm b}A&\frac12\|\bm b\|^2&f\\
  &A&\bm b&\bm c\\
  & & 1&e\\
  & & &1
  \end{pmatrix}
\end{equation}
where $A \in \mathrm{SO}(3)$, $\bm b,\bm c \in \RR^3$, $e,f \in \RR$ and the bar means transpose. The quotient obtained by forgetting the first row and column is the Galilei group and $G$ is (up to a $\ZZ_2$ cover which need not concern us here) its universal central extension, where $f$ is the central parameter. The axioms of classical mechanics as spelled out in \cite[{}13.1, 13.7]{Souriau:1970}, state: ``A free dynamical system is represented by a connected symplectic manifold $(\X,\w)$ together with a  $\w$-preserving action of the Galilei group admitting a \moment map''. As one knows \cite[p.\,66]{Kostant:1968}, the resulting action of $\G$ always admits an equivariant \moment map, so we can rephrase this as: \emph{A free dynamical system is represented by a connected hamiltonian $\G$-space where the center acts trivially}. Identifying $\LG^*$ with $\RR^{11}$ by writing $(\bm L, \bm G, \bm P, E, M)$ for the value of the 1-form
\begin{equation}
   \tfrac12\operatorname{Tr}\left(\left(\begin{smallmatrix}
   \phantom-0\phantom{_1}&\phantom-L_3&-L_2\\ 
   -L_3&\phantom-0\phantom{_1}&\phantom-L_1\\ 
   \phantom-L_2&-L_1&\phantom-0\phantom{_1}
   \end{smallmatrix}\right)
   dA\right) - \<\bm G,d\bm b\> + \<\bm P, d\bm c\> - Ede-Mdf
\end{equation}
at the identity, we observe that this implies, first, that $\X$ is primary over a point $\{M\}$ (known as the \emph{total mass} of the system), and secondly, when $M\ne0$, that $\X$ is also primary over a coadjoint orbit $\RR^6$ of the normal Heisenberg subgroup $\N\subset\G$ defined by $A=\mathrm{id}$, $e=0$. Applying the results \eqref{decTheo}, (\ref{trivCoro}iii) and \eqref{KoenigTheo}, we obtain after computing the \moment map $\phiU$:

1) $\X$ is symplectomorphic to the product of $\U=(\RR^6, Md\bm V\wedge d\bm R)$ by a symplectic manifold $\Z$ on which $\N$ acts trivially. This is the original barycentric decomposition theorem, first proved in this generality in \cite[Thm 13.15]{Souriau:1970}, but which in essence goes back at least to Euler \cite[p.\,187]{Euler:1750a}:
\begin{quote}
	\emph{``Quelque composé que soit le mouvement $[\x]$ d'un corps solide, on le peut toujours décomposer en un mouvement progressif $[\u]$ et en un mouvement de rotation $[\z]$. Le premier s'estime par le mouvement du centre de gravité du corps, et il est toujours permis de con\-si\-dè\-rer ce mouvement séparément et indépendamment de l'autre mouvement.''}
\end{quote}

2) The group $\G\times\mathrm{SO}(3)\times\RR$ acts symplectically on $\X=\U\times\Z$ and the \moment map $\PHI:\X\to\LG^*$ splits as
   $
   \PHI(u,v)
   =
   M\bigl(\bm R\times \bm V, \bm R, \bm V, \smash{\tfrac12}\|\bm V\|^2, 1\bigr)
   +
   \bigl(\bm L', 0, 0, E', 0\vphantom{\tfrac12}\bigr)
   $
where $\bm L'$ and $E'$, known as the \emph{proper} angular momentum and energy,
depend only on $\z$. This is the classical Kœnig theorem \cite[{}13.35]{Souriau:1970}, thus named by Painlevé \cite{Painleve:1930} but going back to Laplace for $\bm L$ \cite [p.\,70]{Laplace:1799} and to Kœnig for $E$ \cite[p.\,173]{Koenig:1751}: 
\begin{quote}
	\emph{``Vis viva $[2E]$, quæ inest corporibus A et B, æqualis est Vi viva communi $[M\|\bm V\|^2]$, una cum Viribus vivis propriis amborum corporum $[2E']$.''}
\end{quote}
\end{exemSSD}

\begin{rema}
   \cite{Souriau:1971a} gives another application of the case where $\N$ is Heisenberg, to the description of a particle in a constant electromagnetic field.

\end{rema}

\begin{exemNS}
Let $\G$ be the solvable group of all upper triangular matrices of the form
\begin{equation}
   \label{nonsplitG}
   \g=
   \begin{pmatrix}
   1&c&0       &e&f\\
   &1&0       &0&e\\
   & &\e{\j a}&0&b\\
   & &        &1&a\\
   & &        & &1
   \end{pmatrix}
   \qquad
   \begin{aligned}
     a,c,e,f&\in\RR\\
     b&\in\CC\\
     \j&=2\pi\smash{\text{\small$\sqrt{-1}$}},
   \end{aligned}
\end{equation}
and let $\N$ be the normal subgroup in which $c=e=0$. We identify $\LG^*$ with $\RR\times\CC\times\RR^3$ by writing $(p,z,r,s,t)$ for the value  of the 1-form 
\begin{equation}
	pda+\Re(\bar zdb)-rdc-sde-tdf
\end{equation}
  at the identity; likewise we identify $\LN^*$ with triples $(p,z,t)$ so that the projection $\LG^*\to\LN^*$ writes $(p,z,r,s,t)\mapsto(p,z,t)$. The coadjoint action of $\G$ preserves the hyperplane $t=1$ and writes there
\begin{equation}
   \label{Ad*(g)}
   g\begin{pmatrix}
      p\\
      z\\
      r\\
      s\\
      1   
   \end{pmatrix}
   =
   \begin{pmatrix}
      p+e+\Re\bigl(\overline{\j b}\,\e{\j a}z\bigr)\\
      \e{\j a}z\\
      r+e\\
      s+a-c\\
      1  
   \end{pmatrix}
\end{equation}
in $\LG^*$, and likewise in $\LN^*$ with the rows for $r,s$ erased. Taking $\acheck=(0,1,0,0,1)$, it is clear from this that the coadjoint orbit $\X=\G(\acheck)$ is $\N$-primary over $\U$ with fiber $\Z$, where
\begin{align}
	\X
	&=\bigl\{\bigl(p,\e{\j q},r,s,1\bigr):p,q,r,s\in\RR\bigr\},\\
	\label{nonsplitU}
	\U
	&=\bigl\{\bigl(p,\e{\j q},1\bigr):p,q\in\RR\bigr\}, 
	&&\hspace{-3.05em}\w_\U=dp\wedge dq,\\
   \Z
   &=\bigl\{\bigl(0,1,r\vphantom{^{\j q}},s,1\bigr):r,s\in\RR\bigr\}, 
   &&\hspace{-3em}\w_\Z=dr\wedge ds.
\end{align}
We claim that despite first appearances, $\X$ splits neither as $\U\times\Z$ nor in fact in any other way. A first hint of this is the observation that its 2-form works out as $\w_\X=dp\wedge dq+dq\wedge dr+ dr\wedge ds$ which is not the sum of $\w_\U$ and $\w_\Z$. For an actual proof we note that since the fiber $\Z$ is connected it is enough to see that $\EFG$, or equivalently the stabilizer~$\K$, acts nontrivially on $\Z$ \eqref{nontrivCoro}. But this is clear since one finds
\begin{equation}
  \K=\left\{\k=
  \left(
  \begin{smallmatrix}
  1&0&0       &0&f\\
   &1&0       &0&0\\
   & &1&0&b\\
   & &        &1&a\\
   & &        & &1
  \end{smallmatrix}
  \right):
  \quad
  \begin{aligned}
     a&\in\ZZ\\
     b,f&\in\RR
  \end{aligned}
  \right\}
\end{equation}
(hence $\EFG=\ZZ$) and, from \eqref{Ad*(g)}, $k(0,1,r,s,1)=(0,1,r,s+a,1)$.
\end{exemNS}

\begin{remas}\label{notPrimary}
(i) As the reader may verify, this orbit also provides an example of nonvanishing Mackey obstruction (\ref{obstruction}), viz.~the cohomology class~of
\begin{equation}
   \theta_\Ucov(g,l_{\,45})=(0,0,-e,c-l_{\,45},0).
\end{equation}
Here we have identified the group $\Gcov$ \eqref{Gcov} with the direct product $\G\times\ZZ$ by sending $\bracl n,l\bracr$ to the pair $(nl, \text{entry $l_{\,45}$ of }l)$.

(ii) It is of interest to note that the representations attached to $\X$ are \emph{not} primary when restricted to $\N$. In fact, \cite[Prop.~3.8]{Ziegler:1996a} applied to the normal subgroup $P\o$ in which $a=e=0$ shows that we have $\X=\Ind_P^\G\{\smash{\acheck_{|\Lie p}}\}$ where $P$ is the subgroup in which $a\in\ZZ$, $e=0$. Accordingly one finds that the representations attached to $\X$ by \cite{Auslander:1971a} are the $\Ind_P^\G\,\chi_w$ where
\begin{equation}
   \chi_w\left(\begin{smallmatrix}
   1&c&0       &0&f\\
   &1&0       &0&0\\
   & &1&0&b\\
   & &        &1&a\\
   & &        & &1
   \end{smallmatrix}
   \right)=w^a\e{i\Re(b)}\e{-if},
   \qquad
   w=\e{i\theta}\in S^1,
\end{equation}
is the most general character of $P$ with differential $i\smash{\acheck_{|\Lie p}}$. These representations can be realized in $L^2(G/P)=L^2(S^1\times\RR)$ by
\begin{equation}
   \label{repG}
   (gE)(z,r)=
   \e{i(\theta+r)a}
   \e{i\Re(\bar z b)}
   \e{-ic(r-e)}
   \e{-if}
   E(z\e{-\j a}, r-e).
\end{equation}
Likewise one finds that the representations attached to $\U$ are the $\Ind_{N\cap P}^N\,\eta_w$ where $\eta_w$ is the restriction of $\chi_w$, which we can realize in $L^2(S^1)$ by
\begin{equation}
   \label{repN}
   (nF)(z)=
   \e{i\theta a}
   \e{i\Re(\bar z b)}
   \e{-if}
   F(z\e{-\j a}).
\end{equation}
Now it is clear that on restricting \eqref{repG} to $\N=\{\g: c=e=0\}$ one gets not one of the representations \eqref{repN} but a direct integral of all of them.
\end{remas}

\begin{exemNTS}
Let $H$ be the subgroup of \eqref{nonsplitG} in which $c=0$. Its coadjoint action is obtained by making $c=0$ and erasing the row for $r$ in \eqref{Ad*(g)}. In particular we see that the orbit
\begin{equation}
	Y=H(0,1,0,1)=\bigl\{\bigl(p,\e{\j q},q,1\bigr):p,q\in\RR\bigr\}
\end{equation}
is $\N$-primary over the same orbit \eqref{nonsplitU} of the same normal subgroup $\N$, and is its universal covering.
\end{exemNTS}

\begin{rema}
As in (\ref{notPrimary}ii) one can verify that the representations attached to $Y$ are not primary when restricted to $\N$. This raises the question of a possible correlation between an irreducible representation being $\N$-primary and the corresponding orbit being \emph{trivially split} $\N$-primary \eqref{splitDefi}.
\end{rema}

\begin{exemTNS}
   \label{Kodaira-Thurston}
All our examples so far were at least \emph{homeomorphic} to the product of a covering of $\U$ with another manifold, so it is natural to ask how far from such a product a primary space can get. In this section we exploit the close similarity of \eqref{prodFib} with the Kodaira-Thurston construction \cites{Thurston:1976}[p.\,10]{Weinstein:1977a} to show that, at least as long as we don't insist on homogeneity under an ambient Lie group $\G$ acting in hamiltonian fashion, examples exist with no such homeomorphism.

To this end, we let $\N$ and $\U$ be same group and cylinder orbit as in \eqref{nonsplitU}, so that $\Ucov$ and $\EFG$ are respectively the plane $\RR^2$ and the integers $\ZZ$. We take for $\Z$ the flat torus $\TT^2=\{(\e{\j\r},\e{\j\s}):\r,\s\in\RR\}$ where the integers act by powers of a Dehn twist: $k(R,S)=(RS^k,S)$, and we form the associated bundle~$\X=\Ucov\times_\EFG\Z$. Thus $\X$ is the quotient of $(\RR^2\times\TT^2,dp\wedge dq+dr\wedge ds)$ by the $\ZZ$-action
\begin{equation}
   \label{Dehn}
   k\bigl(p,q ,\e{\j\r},\e{\j\s}\bigr)
   =\bigl(p,q+k,\e{\j(\r+k\s)},\e{\j\s}\bigr).
\end{equation}
Our claim is that $\X$ is not homeomorphic to the product of any covering of $\U$ by any surface. To see this, we observe that $\RR^2\times\TT^2$ can be regarded as the right quotient of the group $M$ of real matrices
\begin{equation}
  \label{nilmanifold}
m = 
\begin{pmatrix}
1 & 0 & 0 & p \\
  & 1 &\r &\s \\ 
  &   & 1 & q \\ 
  &   &   & 1
\end{pmatrix}
\mbox{ by the subgroup } \ZZ \oplus \ZZ = 
\begin{pmatrix}
1 & 0 & 0 & 0\\
  & 1 & \ZZ & \ZZ\\
  &   & 1 & 0\\
  &   &   & 1
\end{pmatrix}.
\end{equation}
Moreover \eqref{Dehn} coincides with the residual right action on $M/(\ZZ\oplus\ZZ)$ of the 
Heisenberg group
\begin{equation}
   H_\ZZ=
   \begin{pmatrix}
   1 & 0 & 0 & 0\\
     & 1 & \ZZ & \ZZ\\
     &   & 1 & \ZZ\\
     &   &   & 1
   \end{pmatrix}
\end{equation}
which normalizes $\ZZ\oplus\ZZ$. Thus we have $\X=M/H_\ZZ$ and therefore $\pi_1(\X)=H_\ZZ$. Now since $H_\ZZ$ does not admit $\ZZ$ as a direct factor, it follows that $\X$ cannot be the product of the cylinder $\U$ (or any finite covering) by anything. Nor is it the product of the plane $\Ucov$ by any surface, because no surface has fundamental group $H_\ZZ$ \cite[pp.\,135, 143]{Massey:1977}.
\end{exemTNS} 
 

\setlength{\labelalphawidth}{3.2em}

\let\l\polishl

\printbibliography

\end{document}